\numberwithin{equation}{section}
\theoremstyle{plain}	     
\newtheorem{thm}{Theorem}[section] 
\newtheorem{cor}[thm]{Corollary}
\newtheorem{lem}[thm]{Lemma}
\theoremstyle{definition}
\theoremstyle{remark} 
\newtheorem{rem}[thm]{Remark}
\newcommand{\vp}{\varphi}
\newcommand{\sn}{\operatorname{sn}}
\newcommand{\am}{\operatorname{am}}
\newcommand{\slem}{\operatorname{sl}}
\begin{document}
\title{Multiple-angle formulas of generalized trigonometric functions
with two parameters
\footnote{This work was supported by JSPS KAKENHI Grant Number 24540218.}}
\author{Shingo Takeuchi \\
Department of Mathematical Sciences\\
Shibaura Institute of Technology
\thanks{307 Fukasaku, Minuma-ku,
Saitama-shi, Saitama 337-8570, Japan. \endgraf
{\it E-mail address\/}: shingo@shibaura-it.ac.jp \endgraf
{\it 2010 Mathematics Subject Classification.} 
34L10, 33E05, 34L40}}
\date{}

\maketitle

\begin{abstract}
Generalized trigonometric functions with two parameters were introduced 
by Dr\'{a}bek and Man\'{a}sevich to study an inhomogeneous eigenvalue
problem of the $p$-Laplacian. Concerning these functions,
no multiple-angle formula 
has been known except for the classical cases and 
a special case discovered by Edmunds, Gurka and Lang,
not to mention addition theorems.
In this paper, we will present 
new multiple-angle formulas which are established 
between two kinds of the generalized trigonometric functions,
and apply the formulas to generalize classical topics related to 
the trigonometric functions and the lemniscate function. 
\end{abstract}

\textbf{Keywords:} 
Multiple-angle formulas,
Generalized trigonometric functions,
$p$-Laplacian,
Eigenvalue problems,
Pendulum equation,
Lemniscate function,
Lemniscate constant.


\section{Introduction}

Let $p,\ q \in (1,\infty)$ be any constants. 
We define $\sin_{p,q}{x}$ by
the inverse function of 
$$\sin_{p,q}^{-1}{x}:=\int_0^x \frac{dt}{(1-t^q)^{1/p}}, \quad 0 \leq x \leq 1,$$
and
\begin{equation}
\label{eq:pipq}
\pi_{p,q}:=2\sin_{p,q}^{-1}{1}=2\int_0^1 \frac{dt}{(1-t^q)^{1/p}}
=\frac2q B\left(\frac{1}{p^*},\frac1q\right),
\end{equation}
where $p^*:=p/(p-1)$ and $B$ denotes the beta function.
The function $\sin_{p,q}{x}$ is increasing in $[0,\pi_{p,q}/2]$ onto $[0,1]$.
We extend it to $(\pi_{p,q}/2,\pi_{p,q}]$ by $\sin_{p,q}{(\pi_{p,q}-x)}$
and to the whole real line $\mathbb{R}$ as the odd $2\pi_{p,q}$-periodic 
continuation of the function. 
Since $\sin_{p,q}{x} \in C^1(\mathbb{R})$,
we also define $\cos_{p,q}{x}$ by $\cos_{p,q}{x}:=(\sin_{p,q}{x})'$.
Then, it follows that 
$$|\cos_{p,q}{x}|^p+|\sin_{p,q}{x}|^q=1.$$
In case $p=q=2$, it is obvious that $\sin_{p,q}{x},\ \cos_{p,q}{x}$ 
and $\pi_{p,q}$ are reduced to the ordinary $\sin{x},\ \cos{x}$ and $\pi$,
respectively. 
This is a reason why these functions and the constant are called
\textit{generalized trigonometric functions} (with parameter $(p,q)$)
and \textit{the generalized $\pi$}, respectively. 

Dr\'{a}bek and Man\'{a}sevich \cite{DM} introduced 
the generalized trigonometric functions with two parameters
to study an inhomogeneous eigenvalue problem of $p$-Laplacian.
They gave a closed form of 
solutions $(\lambda,u)$ of the eigenvalue problem
$$-(|u'|^{p-2}u')'=\lambda |u|^{q-2}u, \quad u(0)=u(L)=0.$$
Indeed, for any $n=1,2,\ldots$, there exists a curve of 
solutions $(\lambda_{n,R},u_{n,R})$
with a parameter $R \in \mathbb{R} \setminus \{0\}$ such that
\begin{gather}
\lambda_{n,R}=\frac{q}{p^*}\left(\frac{n\pi_{p,q}}{L}\right)^p |R|^{p-q},
\label{eq:ev}\\
u_{n,R}(x)=R\sin_{p,q}\left(\frac{n\pi_{p,q}}{L}x\right)
\label{eq:ef}
\end{gather}
(see also \cite{T}). Conversely, there exists no other solution of the 
eigenvalue problem. Thus, the generalized trigonometric functions 
play important roles to study problems of the $p$-Laplacian.

It is of interest to know whether the generalized trigonometric functions
have multiple-angle formulas unless $p=q=2$.
A few multiple-angle formulas seem to be known.
Actually, in case $2p=q=4$, the function $\sin_{p,q}{x}=\sin_{2,4}{x}$
coincides with the lemniscate sine function $\slem{x}$,
whose inverse function is defined as
$$\slem^{-1}{x}:=\int_0^x \frac{dt}{\sqrt{1-t^4}}.$$
Furthermore, $\pi_{2,4}$ is equal to the lemniscate constant 
$\varpi:=2\slem^{-1}{1}=2.6220\cdots$.
Concerning $\slem{x}$ and $\varpi$, 
we refer to the reader to \cite[p.\,81]{PS}, \cite{To} and \cite[\S{22.8}]{WW}.
Since $\slem{x}$ has the multiple-angle formula
\begin{equation}
\label{eq:maflem}
\slem{(2x)}=\frac{2\slem{x}\sqrt{1-\slem^4{x}}}
{1+\slem^4{x}},\quad 0 \leq x \leq \frac{\varpi}{2},
\end{equation}
we see that
$$\sin_{2,4}{(2x)}=\frac{2\sin_{2,4}{x}\cos_{2,4}{x}}
{1+\sin_{2,4}^4{x}},\quad 0 \leq x \leq \frac{\pi_{2,4}}{2}.$$
Also in case $p^*=q=4$, it is possible to show that 
$\sin_{p,q}{x}=\sin_{4/3,4}{x}$ can be expressed in terms of 
the Jacobian elliptic function,
whose multiple-angle formula yields   
\begin{equation}
\label{eq:EGL}
\sin_{4/3,4}{(2x)}=\frac{2\sin_{4/3,4}{x}\cos^{1/3}_{4/3,4}{x}}
{\sqrt{1+4\sin_{4/3,4}^4{x}\cos_{4/3,4}^{4/3}{x}}}
\quad 0 \leq x <\frac{\pi_{4/3,4}}{4}.
\end{equation}
The formula \eqref{eq:EGL} was investigated by Edmunds, Gurka 
and Lang \cite[Proposition 3.4]{EGL}.
They also proved an addition theorem for $\sin_{4/3,4}{x}$ 
involving \eqref{eq:EGL}.

In this paper, we will present multiple-angle formulas
which are established between 
two kinds of the generalized trigonometric functions 
with parameters $(2,p)$ and $(p^*,p)$.

\begin{thm}
\label{thm:main}
For $p \in (1,\infty)$ and $x \in [0,2^{-2/p}\pi_{2,p}]
=[0,\pi_{p^*,p}/2]$, we have
\begin{equation}
\label{eq:main1}
\sin_{2,p}{(2^{2/p} x)}=2^{2/p} \sin_{p^*,p}x \cos_{p^*,p}^{p^*-1}x
\end{equation}
and
\begin{align}
\cos_{2,p}{(2^{2/p} x)}&=\cos_{p^*,p}^{p^*}{x}-\sin_{p^*,p}^p{x} \notag \\
&=1-2\sin_{p^*,p}^p{x}=2\cos_{p^*,p}^{p^*}{x}-1. \label{eq:main2}
\end{align}
Moreover, for $x \in \mathbb{R}$, we have
\begin{equation}
\label{eq:formula2} 
\sin_{2,p}{(2^{2/p} x)}=2^{2/p} \sin_{p^*,p}{x}|\cos_{p^*,p}{x}|^{p^*-2}\cos_{p^*,p}{x}
\end{equation}
and
\begin{align}
\cos_{2,p}{(2^{2/p} x)}
&=|\cos_{p^*,p}{x}|^{p^*}-|\sin_{p^*,p}{x}|^p \notag \\
&=1-2|\sin_{p^*,p}{x}|^p=2|\cos_{p^*,p}{x}|^{p^*}-1.\label{eq:formula}
\end{align}
\end{thm}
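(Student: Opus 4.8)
The plan is to reduce all four formulas to a single change of variables inside the defining integrals. The crucial substitution is, for $t\in[0,2^{-1/p}]$, $s=2^{2/p}t(1-t^p)^{1/p}$, so that $s^p=4t^p(1-t^p)$ and $1-s^p=(1-2t^p)^2$; differentiating and using $(p-1)/p=1/p^*$ together with $4\cdot 2^{-2(p-1)/p}=2^{2/p}$ one obtains
\begin{equation*}
\frac{ds}{(1-s^p)^{1/2}}=2^{2/p}\,\frac{dt}{(1-t^p)^{1/p^*}} .
\end{equation*}
As $t$ increases from $0$ to $2^{-1/p}$, $s$ increases from $0$ to $1$, so integrating gives $\sin_{2,p}^{-1}\!\big(2^{2/p}u(1-u^p)^{1/p}\big)=2^{2/p}\sin_{p^*,p}^{-1}u$ for $u\in[0,2^{-1/p}]$. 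I would also record the co-function identity $\sin_{p^*,p}^{-1}u+\sin_{p^*,p}^{-1}\big((1-u^p)^{1/p}\big)=\pi_{p^*,p}/2$, obtained by the substitution $t\mapsto(1-t^p)^{1/p}$, which leaves the integrand of $\sin_{p^*,p}^{-1}$ invariant; its case $u=2^{-1/p}$ gives $\sin_{p^*,p}^{-1}(2^{-1/p})=\pi_{p^*,p}/4$, and feeding $u=2^{-1/p}$ into the previous identity then gives $2^{-2/p}\pi_{2,p}=\pi_{p^*,p}/2$ (the identification of the two intervals in the statement).

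Now put $u=\sin_{p^*,p}x$ and note $(1-u^p)^{1/p}=\big((1-u^p)^{1/p^*}\big)^{p^*/p}=\cos_{p^*,p}^{p^*-1}x$ since $p^*/p=p^*-1$; the inversion identity above is then precisely \eqref{eq:main1} on $[0,\pi_{p^*,p}/4]$, because there $2^{2/p}x\le\pi_{2,p}/2$ and $\sin_{2,p}$ is genuinely the inverse of $\sin_{2,p}^{-1}$. For $x\in[\pi_{p^*,p}/4,\pi_{p^*,p}/2]$ I would use $\sin_{2,p}(\pi_{2,p}-\theta)=\sin_{2,p}\theta$ with $\theta=2^{2/p}x$ and $\pi_{2,p}=2^{2/p}\pi_{p^*,p}/2$ to write $\sin_{2,p}(2^{2/p}x)=\sin_{2,p}\big(2^{2/p}(\pi_{p^*,p}/2-x)\big)$ with $\pi_{p^*,p}/2-x\in[0,\pi_{p^*,p}/4]$, apply the already-proved case, and then invoke the co-function identity in the forms $\sin_{p^*,p}(\pi_{p^*,p}/2-x)=\cos_{p^*,p}^{p^*-1}x$ and $\cos_{p^*,p}(\pi_{p^*,p}/2-x)=\sin_{p^*,p}^{p-1}x$ (the latter a one-line consequence of the former, using $(p^*-1)p=p^*$); together with $(p-1)(p^*-1)=1$ the right-hand side collapses back to $2^{2/p}\sin_{p^*,p}x\cos_{p^*,p}^{p^*-1}x$, giving \eqref{eq:main1} on the whole interval.

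For \eqref{eq:main2} I would square \eqref{eq:main1}, obtaining $\sin_{2,p}^p(2^{2/p}x)=4\sin_{p^*,p}^p x\cos_{p^*,p}^{p^*}x$, and combine $|\cos_{2,p}\theta|^2+|\sin_{2,p}\theta|^p=1$ with $\sin_{p^*,p}^p x+\cos_{p^*,p}^{p^*}x=1$ to get $\cos_{2,p}^2(2^{2/p}x)=\big(\cos_{p^*,p}^{p^*}x-\sin_{p^*,p}^p x\big)^2$. Both sides of the would-be identity $\cos_{2,p}(2^{2/p}x)=\cos_{p^*,p}^{p^*}x-\sin_{p^*,p}^p x$ are continuous, are positive on $[0,\pi_{p^*,p}/4)$, vanish at $\pi_{p^*,p}/4$, and are negative on $(\pi_{p^*,p}/4,\pi_{p^*,p}/2]$, which forces the sign $+$; the remaining two expressions in \eqref{eq:main2} are $\sin_{p^*,p}^p x+\cos_{p^*,p}^{p^*}x=1$ rewritten. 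Finally \eqref{eq:formula2} and \eqref{eq:formula} follow by extension: using $2\pi_{2,p}=2^{2/p}\pi_{p^*,p}$, $\sin_{p^*,p}(x+\pi_{p^*,p})=-\sin_{p^*,p}x$ and $\cos_{p^*,p}(x+\pi_{p^*,p})=-\cos_{p^*,p}x$, both sides of each are $\pi_{p^*,p}$-periodic, \eqref{eq:formula2} odd and \eqref{eq:formula} even, so it suffices to verify them on $[0,\pi_{p^*,p}/2]$, where $\cos_{p^*,p}x\ge0$ and they reduce to \eqref{eq:main1} and \eqref{eq:main2}.

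I expect the main obstacle to be the bookkeeping around the two-to-one nature of the substitution $s=2^{2/p}t(1-t^p)^{1/p}$: one must split $[0,\pi_{p^*,p}/2]$ at $\pi_{p^*,p}/4$, track the branch $(1-s^p)^{1/2}=|1-2t^p|$, and use the co-function identity to glue the second half to the first; everything else is routine algebra with the exponents $p$, $p^*$ and the factor $2^{2/p}$. (An alternative is to verify that $\big(2^{2/p}\sin_{p^*,p}x\cos_{p^*,p}^{p^*-1}x,\ \cos_{p^*,p}^{p^*}x-\sin_{p^*,p}^p x\big)$ and $\big(\sin_{2,p}(2^{2/p}x),\cos_{2,p}(2^{2/p}x)\big)$ solve the same first-order system with the same initial data and the same conserved quantity $|\cdot|^p+|\cdot|^2\equiv1$, but the separable-integral computation seems cleanest.)
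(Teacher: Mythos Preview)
Your argument is correct and follows essentially the same route as the paper: the same substitution $s=2^{2/p}t(1-t^p)^{1/p}$ (the paper writes it inversely as $t^p=(1-(1-s^p)^{1/2})/2$), the same co-function identity to handle $x\in[\pi_{p^*,p}/4,\pi_{p^*,p}/2]$, and the same periodicity/parity extension to $\mathbb{R}$. The only minor deviation is that the paper obtains \eqref{eq:main2} by differentiating \eqref{eq:main1}, whereas you raise \eqref{eq:main1} to the $p$-th power and resolve the sign by continuity; both are equally short and valid.
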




In Theorem \ref{thm:main}, the fact
\begin{equation}
\label{eq:key}
\frac{\pi_{2,p}}{2^{2/p}}=\frac{\pi_{p^*,p}}{2}
\end{equation}
is the special case $n=2$ of the following identity.

\begin{thm}
\label{cor:pi}
Let $2 \leq n<p+1$. Then
$$\pi_{\frac{p}{p-1},p} \pi_{\frac{p}{p-2},p} \cdots \pi_{\frac{p}{p-n+1},p}
=n^{1-n/p}\pi_{\frac{n}{n-1},p}\pi_{\frac{n}{n-2},p}\cdots \pi_{\frac{n}{1},p}.$$
\end{thm}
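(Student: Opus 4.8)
The plan is to reduce the identity to the Beta-function representation in \eqref{eq:pipq} and then recognize two classical consequences of the functional equations of $\Gamma$. Fix $p\in(1,\infty)$ and $2\le n<p+1$. First I would compute the relevant conjugate exponents: for $1\le k\le n-1$ one has $\bigl(p/(p-k)\bigr)^{*}=p/k$, so that
\[
\pi_{\frac{p}{p-k},p}=\frac{2}{p}\,B\!\left(\frac{k}{p},\frac{1}{p}\right),
\]
and similarly $\bigl(n/(n-j)\bigr)^{*}=n/j$, so that $\pi_{\frac{n}{n-j},p}=\frac{2}{p}B\!\left(\frac{j}{n},\frac{1}{p}\right)$ for $1\le j\le n-1$. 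The hypothesis $2\le n<p+1$ is precisely what keeps all of these parameters in $(1,\infty)$, so every quantity above is well-defined. Cancelling the common factor $(2/p)^{n-1}$, the claim becomes
\[
\prod_{k=1}^{n-1}B\!\left(\frac{k}{p},\frac{1}{p}\right)=n^{1-n/p}\prod_{j=1}^{n-1}B\!\left(\frac{j}{n},\frac{1}{p}\right).
\]

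For the left-hand product I would use $B(a,b)=\Gamma(a)\Gamma(b)/\Gamma(a+b)$ to write $B(k/p,1/p)=\Gamma(k/p)\Gamma(1/p)/\Gamma((k+1)/p)$; the product then telescopes to $\Gamma(1/p)^{n}/\Gamma(n/p)$. For the right-hand product there is no telescoping, and this is the heart of the matter: here I would invoke the Gauss multiplication formula
\[
\prod_{j=0}^{n-1}\Gamma\!\left(z+\frac{j}{n}\right)=(2\pi)^{(n-1)/2}\,n^{1/2-nz}\,\Gamma(nz)
\]
with $z=1/p$, together with its degenerate case $\prod_{j=1}^{n-1}\Gamma(j/n)=(2\pi)^{(n-1)/2}/\sqrt{n}$ (the $z\to0$ limit). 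Writing $B(j/n,1/p)=\Gamma(j/n)\Gamma(1/p)/\Gamma(1/p+j/n)$, multiplying over $j=1,\dots,n-1$, and using that $\prod_{j=1}^{n-1}\Gamma(1/p+j/n)=\Gamma(1/p)^{-1}\prod_{j=0}^{n-1}\Gamma(1/p+j/n)$, one obtains
\[
\prod_{j=1}^{n-1}B\!\left(\frac{j}{n},\frac{1}{p}\right)=\frac{\Gamma(1/p)^{n}}{n^{1-n/p}\,\Gamma(n/p)}.
\]
Multiplying this by $n^{1-n/p}$ gives back $\Gamma(1/p)^{n}/\Gamma(n/p)$, the value of the left-hand product, which finishes the proof.

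I expect the main obstacle to be identifying the Gauss multiplication formula as exactly the tool that handles the non-telescoping right-hand product, and then keeping track of the powers of $(2\pi)^{(n-1)/2}$ and $n^{1/2-n/p}$ carefully enough that they cancel against $\sqrt{n}$ and the prefactor $n^{1-n/p}$. After the Beta-function reduction the statement is purely a matter of the functional equations of $\Gamma$, with no analysis involved, so apart from this identification the remaining computation is routine.
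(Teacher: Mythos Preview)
Your proof is correct. After the common reduction to the Beta-function identity
\[
\prod_{k=1}^{n-1}B\!\left(\frac{k}{p},\frac{1}{p}\right)=n^{1-n/p}\prod_{j=1}^{n-1}B\!\left(\frac{j}{n},\frac{1}{p}\right),
\]
the paper proceeds by quoting a ready-made Beta-function identity from Whittaker--Watson (\S12.15, Example),
\[
B(nx,ny)=\frac{1}{n^{ny}}\,\frac{\prod_{k=0}^{n-1}B(x+k/n,\,y)}{\prod_{k=1}^{n-1}B(ky,\,y)},
\]
and specializes it to $x=1/n$, $y=1/p$. You instead build the identity from scratch: the left product telescopes via $B(k/p,1/p)=\Gamma(k/p)\Gamma(1/p)/\Gamma((k+1)/p)$, and the right product is evaluated using the Gauss multiplication formula (plus the familiar $\prod_{j=1}^{n-1}\Gamma(j/n)=(2\pi)^{(n-1)/2}/\sqrt{n}$). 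Since the Whittaker--Watson formula is itself a repackaging of Gauss multiplication, the two arguments are really the same tool seen from different ends; your route is slightly more self-contained, while the paper's is shorter once one knows the reference.
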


We give a series expansion of $\pi_{p^*,p}$
as a counterpart of the Gregory-Leibniz series for $\pi$.
It is worth pointing out that $\pi_{p^*,p}$ is the area 
enclosed by the $p$-circle $|x|^p+|y|^p=1$
(see \cite{LE}).

\begin{thm}
\label{prop:gls}
\begin{align*}
\frac{\pi_{p^*,p}}{4}
&=\sum_{n=0}^\infty\frac{(2/p)_n}{n!}\frac{(-1)^n}{pn+1} \\
&=1-\frac{2}{p(p+1)}+\frac{2+p}{p^2(2p+1)}-\frac{(2+p)(2+2p)}{3p^3(3p+1)}+\cdots,
\end{align*}
where $(a)_n:=\Gamma(a+n)/\Gamma(a)=a(a+1)(a+2)\cdots(a+n-1)$
and $\Gamma$ denotes the gamma function.
\end{thm}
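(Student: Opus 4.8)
The plan is to start from the integral representation of $\pi_{p^*,p}$ coming from \eqref{eq:pipq}, namely
\[
\frac{\pi_{p^*,p}}{4}=\frac12\int_0^1\frac{dt}{(1-t^p)^{1/p}},
\]
since $p^*{}^*=p$ gives $1/(p^*)^*=1/p$, and the factor $2$ in \eqref{eq:pipq} against the $4$ in the denominator. I would then expand the integrand $(1-t^p)^{-1/p}$ by the generalized binomial series, $(1-s)^{-\alpha}=\sum_{n=0}^\infty\frac{(\alpha)_n}{n!}s^n$ with $\alpha=1/p$ and $s=t^p$, which converges for $0\le t<1$. Interchanging sum and integral — justified on $[0,1-\delta]$ by uniform convergence and then letting $\delta\to0$ by monotone convergence, since all terms past the first are nonnegative after the sign $(-1)^n$ is absorbed, or more directly by Abel's theorem applied at $t=1$ — and using $\int_0^1 t^{pn}\,dt=1/(pn+1)$, I obtain
\[
\frac{\pi_{p^*,p}}{4}=\sum_{n=0}^\infty\frac{(1/p)_n}{n!}\,\frac{1}{pn+1}.
\]

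This is not yet the claimed series, which has $(2/p)_n$ and an alternating sign. To reconcile the two, I would rewrite $\frac{1}{pn+1}=\int_0^1 u^{pn}\,du$ is the same trick; instead the cleaner route is to integrate by parts in the original integral, or equivalently to use the substitution $t\mapsto(1-t^p)^{1/p}$, which maps $[0,1]$ to itself and carries the $(p^*,p)$ integrand to a multiple of the $(p,p)$-type integrand; concretely, setting $s=t^p$ and recognizing the integral as a Beta function, $\int_0^1 t^{-?}(1-s)^{-1/p}$, one gets $\frac{\pi_{p^*,p}}{4}=\frac{1}{p}B\!\left(\frac1p,\frac{p-1}{p}\right)/2$, but I prefer to avoid closed forms and instead match series coefficientwise. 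The identity
\[
\sum_{n\ge0}\frac{(1/p)_n}{n!}\frac{1}{pn+1}=\sum_{n\ge0}\frac{(2/p)_n}{n!}\frac{(-1)^n}{pn+1}
\]
can be proved by noting both sides equal $\int_0^1(1-t^p)^{-1/p}\,dt\,/\,1$ after the substitution $t\mapsto(1-t^p)^{1/p}$ in one of them: under this involution $dt\mapsto -(1-t^p)^{1/p-1}t^{p-1}dt$ and $(1-t^p)^{-1/p}\mapsto t^{-1}$, turning $\int_0^1(1-t^p)^{-1/p}dt$ into $\int_0^1 t^{p-2}(1-t^p)^{1/p-1}dt$; expanding $(1-t^p)^{1/p-1}=\sum_n\frac{(1-1/p)_n}{n!}(-1)^n t^{pn}$ — wait, the exponent there is $1-1/p$, not $2/p$ — so I would instead expand after one integration by parts, which lowers the exponent of $(1-t^p)$ by raising $1/p$ to $1/p+1$ is wrong; the correct bookkeeping is: integrate $\int_0^1(1-t^p)^{-1/p}dt$ by parts with $u=(1-t^p)^{-1/p}$, $dv=dt$, giving boundary terms plus $\int_0^1 t\cdot t^{p-1}(1-t^p)^{-1/p-1}dt$, and iterating — but the slick way is simply the substitution producing $\int_0^1 t^{p-2}(1-t^p)^{1/p-1}dt$ and then one more integration by parts to shift $1/p-1$ to $1/p$ is not an integer step. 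Given these pitfalls, the safest presentation is: expand $(1-t^p)^{-1/p}$ directly, get $\sum\frac{(1/p)_n}{n!(pn+1)}$, and separately expand the post-substitution integrand to get the $(2/p)_n$ form, then observe the two expansions represent the \emph{same} integral and hence are equal; the displayed four-term expansion follows by writing out $(2/p)_0=1$, $(2/p)_1=2/p$, $(2/p)_2=(2/p)(2/p+1)=(2+p)\cdot2/p^2$, $(2/p)_3=(2/p)(1+2/p)(2+2/p)$, and $pn+1=1,p+1,2p+1,3p+1$.

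The main obstacle is the justification of the term-by-term integration at the endpoint $t=1$, where the integrand blows up, and correctly identifying which power of $(1-t^p)$ appears after the self-substitution so that the Pochhammer parameter comes out as $2/p$ rather than $1/p$ or $1-1/p$. I expect that invoking Abel's theorem — the left-hand series $\sum\frac{(2/p)_n(-1)^n}{n!(pn+1)}$ has partial sums that are the values at $x=1$ of the power series $\sum\frac{(2/p)_n(-1)^n}{n!(pn+1)}x^{pn+1}=\int_0^x(1-t^p)^{-2/p}\cdot$(something), whose radius of convergence is $1$ and which converges at $x=1$ since $2/p<1$ forces... — actually convergence at the endpoint holds because $\frac{(2/p)_n}{n!}\sim n^{2/p-1}/\Gamma(2/p)$ and dividing by $pn+1$ gives a term $\sim n^{2/p-2}$, summable as $2/p-2<-1$. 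So Abel's theorem applies cleanly and the series equals the limit of the corresponding integral as $x\to1^-$, which is $\pi_{p^*,p}/4$. I would close by remarking, as the statement already notes, that this is the natural analogue of the Gregory–Leibniz series, recovered at $p=2$ where $(2/p)_n=(1)_n=n!$ and the series becomes $\sum(-1)^n/(2n+1)=\pi/4$.
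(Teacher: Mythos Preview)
Your very first displayed formula is wrong: from \eqref{eq:pipq} one has
\[
\frac{\pi_{p^*,p}}{4}=\frac12\int_0^1\frac{dt}{(1-t^p)^{1/p^*}},
\]
with exponent $1/p^*=(p-1)/p$, \emph{not} $1/p$. (Your justification ``$p^{**}=p$'' pertains to the first argument of the Beta function in \eqref{eq:pipq}, not to the exponent in the integrand.) The two exponents coincide only at $p=2$, which is why your sanity check passes there; at $p=4$, for instance, your integral evaluates to $\pi/(4\sqrt{2})$, whereas the correct value is $\varpi/(2\sqrt{2})$, and these differ. Consequently the series $\sum(1/p)_n/(n!(pn+1))$ you derive (where the factor $1/2$ has also been silently dropped) is not equal to $\pi_{p^*,p}/4$ for $p\neq2$, and the ``identity'' $\sum\frac{(1/p)_n}{n!(pn+1)}=\sum\frac{(2/p)_n(-1)^n}{n!(pn+1)}$ you then set out to prove is simply false. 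This is why your substitution attempts keep producing the wrong Pochhammer parameter. Your endpoint convergence argument is also incomplete: the inequality $2/p-2<-1$ holds only for $p>2$, so for $1<p\le2$ the series is not absolutely convergent and Abel's theorem must be fed by the alternating-series test, not a summability estimate.

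The paper avoids all of this by working not with $\sin_{p^*,p}^{-1}$ but with the generalized tangent $\tau_{p^*,p}$: one computes directly that
\[
\tau_{p^*,p}^{-1}(x)=\int_0^x\frac{dt}{(1+t^p)^{2/p}},
\]
and the binomial expansion of $(1+t^p)^{-2/p}$ immediately produces the alternating $(2/p)_n$ series with no further manipulation. The identification $\tau_{p^*,p}^{-1}(1)=\pi_{p^*,p}/4$ is Lemma~\ref{lem:pi4}. Convergence at $x=1$ is then handled by the alternating-series test (the ratio $a_{n+1}/a_n<1$ is checked by hand, and $a_n\to0$ via Euler's product for $\Gamma$), valid for all $p>1$, after which Abel's continuity theorem finishes the proof.
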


We will apply Theorems \ref{thm:main}--\ref{prop:gls} 
to the following problems (I)--(V).

(I) \textit{An alternative proof of \eqref{eq:EGL}}.
It should be noted that 
the multiple-angle formula \eqref{eq:main1} in Theorem \ref{thm:main}
allows \eqref{eq:EGL}
to be rewritten in terms of the lemniscate function $\slem{x}=\sin_{2,4}{x}$:
\begin{equation*}
\sin_{4/3,4}{(2x)}=\frac{\sqrt{2}\slem{(\sqrt{2}x)}}
{\sqrt{1+\slem^4{(\sqrt{2}x)}}},
\quad 0 \leq x <\frac{\pi_{4/3,4}}{4}=\frac{\varpi}{2\sqrt{2}},
\end{equation*}
where the last equality above follows from \eqref{eq:key} with $\pi_{2,4}=\varpi$.
This indicates that it is possible to obtain \eqref{eq:EGL} from
the multiple-angle formula \eqref{eq:maflem} for the lemniscate function.

(II) \textit{A relation between eigenvalue problems of the $p$-Laplacian
and that of the Laplacian}.
Let $u$ be a function with $(n-1)$-zeros in $(0,L)$ 
satisfying 
$$-(|u'|^{p-2}u')'=\lambda |u|^{p^*-2}u, \quad u(0)=u(L)=0$$
for some $\lambda>0$. 
Similarly, let $v$ be a function with $n$-zeros in $(0,L)$
satisfying 
$$-(|v'|^{p-2}v')'=\mu |v|^{p^*-2}v, \quad v'(0)=v'(L)=0$$
for some $\mu>0$. 
Then, by Theorem \ref{thm:main}, 
we can show that the product $w=uv$ is a function 
with $(2n-1)$-zeros in $(0,L)$ satisfying
$$-w''=2p^*(\lambda \mu)^{1/p} |w|^{p^*-2}w,\quad w(0)=w(L)=0.$$
The curious fact is the consequence of a straightforward calculation with 
\eqref{eq:ev},\ \eqref{eq:ef},\ \eqref{eq:formula2} and \eqref{eq:key}.
Such a relation between the eigenvalue problems of the $p$-Laplacian
and that of the Laplacian may be known.
However, we can not find a literature proving it,
while the assertion in case $p=2$ is trivial because
$$w=\sin{\left(\frac{n\pi}{L}x\right)}\cos{\left(\frac{n\pi}{L}x\right)}
=\frac12 \sin{\left(\frac{2n\pi}{L}x\right)}.$$

(III) \textit{A pendulum-type equation with the $p$-Laplacian}.
We give a closed form of 
solutions of the pendulum-type equation
$$-(|\theta'|^{p-2}\theta')'
=\lambda^p |\sin_{2,p}{\theta}|^{p-2}\sin_{2,p}{\theta}.$$
In case $p=2$, this equation is the ordinary pendulum equation
$-\theta''=\lambda^2 \sin{\theta}$ and it is well known that the solutions
can be expressed in terms of 
the Jacobian elliptic function. 
We will obtain an expression of the solution for 
the pendulum-type equation above by using our special functions 
involving a generalization of the Jacobian elliptic function in \cite{T,T2}.
There are studies of other (forced) pendulum-type equations with $p$-Laplacian
versus $\sin{\theta}$ in \cite{M}; versus $\sin_{p,p}{\theta}$ in \cite{AMN}, 
for the purpose of finding periodic solutions.

(IV) \textit{Catalan-type constants}.
Catalan's constant, which occasionally appears in estimates in combinatorics,
is defined by
$$G=\sum_{n=0}^\infty \frac{(-1)^n}{(2n+1)^2}=0.9159\cdots.$$
We can find a lot of representation of $G$ in \cite{Bra}; 
for a typical example,
\begin{equation}
\label{eq:Gs} 
\frac12\int_0^{\pi/2} \frac{x}{\sin{x}}\,dx=G.
\end{equation}
The multiple-angle formula \eqref{eq:main1} gives a generalization of \eqref{eq:Gs} as
\begin{equation}
\label{eq:Gp1}
\frac{1}{2^{2/p}}\int_0^{\pi_{2,p}/2} \frac{x}{\sin_{2,p}{x}}\,dx
=\sum_{n=0}^\infty \frac{(2/p)_n}{n!} \frac{(-1)^n}{(pn+1)^2}.
\end{equation}
In case $p=2$, 
the formula \eqref{eq:Gp1} coincides 
with \eqref{eq:Gs}. Moreover, for $p=4$ we obtain the interesting formula
$$\frac{1}{\sqrt{2}} \int_0^{\varpi/2} \frac{x}{\slem{x}}\,dx
=\sum_{n=0}^\infty \frac{(1/2)_n}{n!} \frac{(-1)^n}{(4n+1)^2}.$$

(V) \textit{Series expansions of the lemniscate constant $\varpi$}.
The lemniscate constant $\varpi$ has the formula (\cite[Theorem 5]{To}):
\begin{equation*}
\frac{\varpi}{2}
=1+\frac{1}{10}+\frac{1}{24}+\frac{5}{208}+\cdots
+\frac{(2n-1)!!}{(2n)!!}\frac{1}{4n+1}+\cdots,
\end{equation*}
where $(-1)!!:=1$. 
For this, using Theorem \ref{prop:gls} with \eqref{eq:key}, we will obtain 
$$\frac{\varpi}{2\sqrt{2}}
=1-\frac{1}{10}+\frac{1}{24}-\frac{5}{208}+\cdots
+\frac{(2n-1)!!}{(2n)!!}\frac{(-1)^n}{4n+1}+\cdots,$$
which does not appear in Todd \cite{To} and seems to be new. 
We will also produce some other formulas of $\varpi$. 

This paper is organized as follows.
Section 2 is devoted to the proofs of Theorems \ref{thm:main}--\ref{prop:gls}.
In Section 3, we deal with the above-mentioned problems (I)--(V).


\section{The multiple-angle formulas}
\label{sec:def}

Let $p,\ q \in (1,\infty)$ and $x \in (0,\pi_{p,q}/2)$.
It is easy to see that
\begin{gather*}
\cos_{p,q}^p{x}+\sin_{p,q}^q{x}=1, \\
(\sin_{p,q}{x})'=\cos_{p,q}{x}, \quad
(\cos_{p,q}{x})'=-\frac{q}{p}\sin_{p,q}^{q-1}{x}\cos_{p,q}^{2-p}{x}, \\
(\cos_{p,q}^{p-1}{x})'=-\frac{q}{p^*}\sin_{p,q}^{q-1}{x}. 
\end{gather*}
If we extend to these formulas for any $x \in \mathbb{R}$, then 
the last one, for example, corresponds to 
\begin{equation}
(|\cos_{p,q}{x}|^{p-2}\cos_{p,q}{x})'
=-\frac{q}{p^*}|\sin_{p,q}{x}|^{q-2}\sin_{p,q}{x}. \label{eq:diffsin} 
\end{equation}

In a particular case, 
\begin{gather}
\cos_{p^*,p}^{p^*}{x}+\sin_{p^*,p}^p{x}=1, \label{eq:pyta} \\
(\sin_{p^*,p}{x})'=\cos_{p^*,p}{x}, \quad
(\cos_{p^*,p}{x})'=-(p-1)\sin_{p^*,p}^{p-1}{x}\cos_{p^*,p}^{2-p^*}{x}, \notag \\
(\cos_{p^*,p}^{p^*-1}{x})'=-\sin_{p^*,p}^{p-1}{x}. \notag
\end{gather}
From the last one and the differentiation of inverse functions,
$$(\cos_{p^*,p}^{p^*-1})^{-1}{(y)}=\int_y^1 \frac{dt}{(1-t^p)^{1/p^*}}, \quad 0 \leq y \leq 1,$$
hence
$$\sin_{p^*,p}^{-1}{y}+(\cos_{p^*,p}^{p^*-1})^{-1}{(y)}=\frac{\pi_{p^*,p}}{2}.$$
Therefore, for $x \in [0,\pi_{p^*,p}/2]$
\begin{gather}
\sin_{p^*,p}\left(\frac{\pi_{p^*,p}}{2}-x\right)=\cos_{p^*,p}^{p^*-1}{x}, \label{eq:sym} \\
\cos_{p^*,p}^{p^*-1}\left(\frac{\pi_{p^*,p}}{2}-x\right)=\sin_{p^*,p}{x}. 
\label{eq:sym2}
\end{gather}

Throughout this paper, the following function is useful:
$$\tau_{p,q}(x):=\frac{\sin_{p,q}{x}}{|\cos_{p,q}{x}|^{q/p-1}\cos_{p,q}{x}}, \quad 
x \neq \frac{2n+1}{2}\pi_{p,q},\ n \in \mathbb{Z}.$$
Then, it follows immediately from \eqref{eq:sym} and \eqref{eq:pyta} that 

\begin{lem}
\label{lem:pi4}
For $x \in (0,\pi_{p^*,p}/2)$, $\tau_{p^*,p}{(x)}=1$ implies 
$x=\pi_{p^*,p}/4$. Moreover, 
$\sin_{p^*,p}^{-1}{(2^{-1/p})}=\cos_{p^*,p}^{-1}{(2^{-1/p^*})}
=\pi_{p^*,p}/4$.
\end{lem}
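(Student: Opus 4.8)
The plan is to prove Lemma~\ref{lem:pi4} directly from the two symmetry relations \eqref{eq:sym}--\eqref{eq:sym2} and the Pythagorean-type identity \eqref{eq:pyta}. First I would unwind the definition of $\tau_{p^*,p}$ with $q=p$: since $q/p-1=0$, we simply have $\tau_{p^*,p}(x)=\sin_{p^*,p}x/\cos_{p^*,p}x$ on $(0,\pi_{p^*,p}/2)$, where both $\sin_{p^*,p}x$ and $\cos_{p^*,p}x$ are positive. Hence $\tau_{p^*,p}(x)=1$ is equivalent to $\sin_{p^*,p}x=\cos_{p^*,p}x$. Raising $\cos_{p^*,p}x$ to the power $p^*-1$ is a strictly monotone operation on $[0,1]$, so I would instead compare $\sin_{p^*,p}x$ with $\cos_{p^*,p}^{p^*-1}x=\sin_{p^*,p}(\pi_{p^*,p}/2-x)$ by \eqref{eq:sym}; strict monotonicity of $\sin_{p^*,p}$ on $[0,\pi_{p^*,p}/2]$ then forces $x=\pi_{p^*,p}/2-x$, i.e. $x=\pi_{p^*,p}/4$. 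A small point to be careful about: $\tau_{p^*,p}(x)=1$ gives $\sin_{p^*,p}x=\cos_{p^*,p}x$, which does not literally say $\sin_{p^*,p}x=\cos_{p^*,p}^{p^*-1}x$ unless we know $\cos_{p^*,p}x\in\{0,1\}$ or $p^*=2$; so the cleaner route is to observe that at $x=\pi_{p^*,p}/4$ we must have $\cos_{p^*,p}(\pi_{p^*,p}/4)=\sin_{p^*,p}(\pi_{p^*,p}/4)$ and then use \eqref{eq:pyta} to pin down the common value, rather than trying to go through \eqref{eq:sym} for the general solution. I will therefore argue uniqueness from the strict monotonicity of $\tau_{p^*,p}$ itself: $\sin_{p^*,p}$ is increasing and $\cos_{p^*,p}$ is decreasing (positive) on $(0,\pi_{p^*,p}/2)$, so $\tau_{p^*,p}$ is strictly increasing there, whence $\tau_{p^*,p}(x)=1$ has at most one solution; and the symmetry \eqref{eq:sym}--\eqref{eq:sym2} shows $x\mapsto\pi_{p^*,p}/2-x$ maps the solution set to itself, forcing the unique solution to be the fixed point $\pi_{p^*,p}/4$.

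For the second assertion I would first solve $\sin_{p^*,p}x=\cos_{p^*,p}x$ at $x=\pi_{p^*,p}/4$ together with \eqref{eq:pyta}. Write $s=\sin_{p^*,p}(\pi_{p^*,p}/4)$ and $c=\cos_{p^*,p}(\pi_{p^*,p}/4)$. From \eqref{eq:sym} with $x=\pi_{p^*,p}/4$ we get $s=c^{p^*-1}$, and from \eqref{eq:pyta} we have $c^{p^*}+s^p=1$. Substituting $s=c^{p^*-1}$ into the latter gives $c^{p^*}+c^{p(p^*-1)}=1$. Now the key algebraic identity is $p(p^*-1)=p\cdot\frac{1}{p-1}=p^*$, since $p^*-1=\frac{p}{p-1}-1=\frac{1}{p-1}$ and $p^*=\frac{p}{p-1}$; hence $c^{p^*}+c^{p^*}=1$, i.e. $c^{p^*}=1/2$, so $c=2^{-1/p^*}$. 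Then $s=c^{p^*-1}=(2^{-1/p^*})^{p^*-1}=2^{-(p^*-1)/p^*}=2^{-1/p}$ because $(p^*-1)/p^*=\frac{1/(p-1)}{p/(p-1)}=1/p$. Since $\sin_{p^*,p}$ and $\cos_{p^*,p}$ are bijections from $[0,\pi_{p^*,p}/2]$ onto $[0,1]$ (for $\cos_{p^*,p}$ note it is decreasing from $1$ to $0$ on this interval), this shows $\sin_{p^*,p}^{-1}(2^{-1/p})=\pi_{p^*,p}/4$ and $\cos_{p^*,p}^{-1}(2^{-1/p^*})=\pi_{p^*,p}/4$.

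The main obstacle, such as it is, is bookkeeping of exponents rather than anything conceptual: one must verify cleanly that $p(p^*-1)=p^*$ and $(p^*-1)/p^*=1/p$ so that the Pythagorean relation collapses to $2c^{p^*}=1$. A secondary subtlety is justifying the ``implies'' direction rigorously, i.e. that $\tau_{p^*,p}(x)=1$ has a \emph{unique} solution on the open interval; for this I would invoke monotonicity of $\sin_{p^*,p}$ (given in the text, increasing on $[0,\pi_{p^*,p}/2]$) and of $\cos_{p^*,p}=(\sin_{p^*,p})'>0$ on the open interval together with the explicit derivative $(\cos_{p^*,p}x)'=-(p-1)\sin_{p^*,p}^{p-1}x\,\cos_{p^*,p}^{2-p^*}x<0$ stated just above the lemma, so that the quotient $\tau_{p^*,p}=\sin_{p^*,p}/\cos_{p^*,p}$ is strictly increasing and thus injective. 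Everything else is a one-line substitution, so the proof should be short.
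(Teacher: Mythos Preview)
There is a substitution error at the very first step.  In the definition
\[
\tau_{p,q}(x)=\frac{\sin_{p,q}x}{|\cos_{p,q}x|^{\,q/p-1}\cos_{p,q}x},
\]
the symbol ``$p$'' in the exponent is the \emph{first subscript} of $\tau$, not the ambient parameter.  When you specialize to $\tau_{p^*,p}$ you must replace that $p$ by $p^*$, so the exponent becomes $p/p^*-1=p-2$, not $p/p-1=0$; in particular $\tau_{p^*,p}(x)\neq \sin_{p^*,p}x/\cos_{p^*,p}x$ for $p\neq 2$.  (Consistently with how the paper uses $\tau_{p^*,p}$ later---e.g.\ the formula $\tau_{p^*,p}^{-1}(x)=\int_0^x(1+t^p)^{-2/p}\,dt$ in the proof of Theorem~\ref{prop:gls}---the intended denominator is $\cos_{p^*,p}^{\,p^*-1}x$; either way the exponent on $\cos$ is not $1$.)

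This misidentification breaks your reflection argument.  The relations \eqref{eq:sym}--\eqref{eq:sym2} give $\sin_{p^*,p}(\tfrac{\pi_{p^*,p}}{2}-x)=\cos_{p^*,p}^{\,p^*-1}x$ and $\cos_{p^*,p}(\tfrac{\pi_{p^*,p}}{2}-x)=\sin_{p^*,p}^{\,p-1}x$, which do \emph{not} send the equation $\sin_{p^*,p}x=\cos_{p^*,p}x$ to itself unless $p=2$; so the claim that ``$x\mapsto\pi_{p^*,p}/2-x$ maps the solution set to itself'' is false under your formula.  With the correct reading $\tau_{p^*,p}(x)=\sin_{p^*,p}x/\cos_{p^*,p}^{\,p^*-1}x$, the equation $\tau_{p^*,p}(x)=1$ is exactly $\sin_{p^*,p}x=\cos_{p^*,p}^{\,p^*-1}x=\sin_{p^*,p}(\tfrac{\pi_{p^*,p}}{2}-x)$ by \eqref{eq:sym}, and strict monotonicity of $\sin_{p^*,p}$ on $[0,\pi_{p^*,p}/2]$ forces $x=\pi_{p^*,p}/4$ in one line---this is precisely the paper's ``follows immediately from \eqref{eq:sym} and \eqref{eq:pyta}''.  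Your monotonicity/fixed-point detour is then unnecessary.

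Your treatment of the ``Moreover'' clause is correct and is exactly what the paper intends: from \eqref{eq:sym} at $x=\pi_{p^*,p}/4$ one gets $s=c^{\,p^*-1}$, and since $p(p^*-1)=p^*$, substituting into \eqref{eq:pyta} yields $2c^{\,p^*}=1$, hence $c=2^{-1/p^*}$ and $s=2^{-1/p}$.
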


%
%
%

Let us prove the multiple-angle formulas in Theorem \ref{thm:main}.

\begin{proof}[Proof of Theorem \ref{thm:main}]
Let $x \in [0,\pi_{p^*,p}/4]$. Then, $y=\sin_{p^*,p}{x} \in [0,2^{-1/p}]$ by Lemma \ref{lem:pi4}.
Setting $t^p=(1-(1-s^p)^{1/2})/2$ in
$$\sin_{p^*,p}^{-1}{y}=\int_0^y \frac{dt}{(1-t^p)^{1/p^*}},$$
we have
\begin{align*}
\sin_{p^*,p}^{-1}{y}
&=\int_0^{y(4(1-y^p))^{1/p}}
\dfrac{\dfrac{2^{-1-1/p}s^{p-1}}{(1-s^p)^{1/2}(1-(1-s^p)^{1/2})^{1-1/p}}\,ds}
{2^{-1+1/p} (1+(1-s^p)^{1/2})^{1-1/p}}\\
&=2^{-2/p}
\int_0^{y(4(1-y^p))^{1/p}} 
\frac{ds}{(1-s^p)^{1/2}};
\end{align*}
that is, 
\begin{equation}
\label{eq:pe}
\sin_{p^*,p}^{-1}{y}=2^{-2/p} \sin_{2,p}^{-1}{(y(4(1-y^p)^{1/p})}.
\end{equation}
Hence we obtain
$$\sin_{2,p}{(2^{2/p} x)}=2^{2/p} \sin_{p^*,p}{x}\cos_{p^*,p}^{p^*-1}{x},$$
and \eqref{eq:main1} is proved.
In particular, letting $y=2^{-1/p}$ in \eqref{eq:pe} and using Lemma \ref{lem:pi4}, 
we get
$$\frac{\pi_{p^*,p}}{4}=2^{-2/p} \sin_{2,p}^{-1}{1}
=\frac{\pi_{2,p}}{2^{1+2/p}},$$
which implies \eqref{eq:key}.

Next, let $x \in (\pi_{p^*,p}/4,\pi_{p^*,p}/2]$
and $y:=\pi_{p^*,p}/2-x \in [0,\pi_{p^*,p}/4)$. 
By the symmetry properties
\eqref{eq:sym} and \eqref{eq:sym2}, we obtain
$$2^{2/p}\sin_{p^*,p}{x}\cos_{p^*,p}^{p^*-1}{x}
=2^{2/p}\cos_{p^*,p}^{p^*-1}{y}\sin_{p^*,p}{y}.$$
According to the argument above, 
the right-hand side is identical to $\sin_{2,p}{(2^{2/p}y)}$. 
Moreover, \eqref{eq:key} gives
$$\sin_{2,p}{(2^{2/p}y)}=\sin_{2,p}{(\pi_{2,p}-2^{2/p}x)}
=\sin_{2,p}{(2^{2/p}x)}.$$

The formula \eqref{eq:main2} is deduced from differentiating both sides
of \eqref{eq:main1}. Moreover, \eqref{eq:formula2} and \eqref{eq:formula}
come from the periodicities of the functions,
\end{proof}

%
\begin{proof}[Proof of Theorem \ref{cor:pi}]
Setting $x=1/n$ and $y=1/p$ in the formula of the beta function
(see \cite[\S{12.15}, Example]{WW})
$$B(nx,ny)=\frac{1}{n^{ny}} \frac{\prod_{k=0}^{n-1} 
B\left(x+k/n,y\right)}{\prod_{k=1}^{n-1}B(ky,y)}, \quad n \geq 2,\ x,\ y>0,$$
we have
$$\frac{p}{n}=\frac{1}{n^{n/p}}
\frac{\prod_{k=1}^nB(k/n,1/p)}{\prod_{k=1}^{n-1}B(k/p,1/p)}.$$
Hence,
$$\prod_{k=1}^{n-1} B\left(\frac{k}{p},\frac1p\right)=n^{1-n/p}
\prod_{k=1}^{n-1}B\left(\frac{k}{n},\frac1p\right), \quad n \geq 2.$$
From \eqref{eq:pipq}, this is rewritten as
$$\prod_{k=1}^{n-1} \pi_{(p/k)^*,p}=n^{1-n/p}
\prod_{k=1}^{n-1}\pi_{(n/k)^*,p}, \quad
2 \leq n <p+1,$$
which is precisely the assertion of the theorem.
\end{proof}

\begin{rem}
Taking $n=2$ in Theorem \ref{cor:pi}, we have the relation \eqref{eq:key}
between $\pi_{p^*,p}$ and $\pi_{2,p}$.
In fact,   
\eqref{eq:key} is equivalent to the duplication formula of the gamma function
(see \cite[\S{12.15}, Corollary]{WW})
$$\Gamma (2x)=\frac{2^{2x-1}}{\sqrt{\pi}} \Gamma(x)\Gamma\left(x+\frac12\right).$$
\end{rem}

\begin{proof}[Proof of Theorem \ref{prop:gls}]
Let $x \in (0,1)$. Differentiating the inverse function of $\tau_{p^*,p}{(x)}$, we have
$$\tau_{p^*,p}^{-1}{(x)}=\int_0^x \frac{dt}{(1+t^p)^{2/p}}.$$
Hence
\begin{align}
\label{eq:tan}
\tau_{p^*,p}^{-1}{(x)}
=\int_0^x \sum_{n=0}^\infty \binom{-2/p}{n} t^{pn}\, dt
=x \sum_{n=0}^\infty \frac{(2/p)_n}{n!}\frac{(-x^p)^n}{pn+1}.
\end{align}
By Abel's continuity theorem \cite[\S3.71]{WW}, 
it is sufficient to show that the right-hand side of \eqref{eq:tan} converges 
at $x=1$; i.e., the series
$$\sum_{n=0}^\infty \frac{(2/p)_n}{n!}\frac{(-1)^n}{pn+1}
=:\sum_{n=0}^\infty (-1)^na_n$$
converges. This is an alternating series and $\{a_n\}$ is decreasing because
$$0 \leq \frac{a_{n+1}}{a_n}=\frac{(2/p+n)(pn+1)}{(n+1)(pn+p+1)}
=\frac{(n+1/p)(pn+2)}{(n+1)(pn+p+1)}<1.$$ 
Moreover, $\{a_n\}$ converges to $0$ as $n \to \infty$. Indeed,
Euler's formula for the gamma function \cite[\S12.11, Example]{WW} gives 
\begin{align*}
\lim_{n \to \infty} a_n
&=\lim_{n \to \infty}
\frac{2/p (2/p+1)(2/p+2)\cdots (2/p+n-1)}{(n-1)!n^{2/p}}
\frac{n^{2/p}}{n(pn+1)}\\
&=\frac{1}{\Gamma(2/p)}\cdot 0=0.
\end{align*}
Therefore, the series above converges to $\tau_{p^*,p}^{-1}(1)$ 
(see for instance \cite[\S 2.31, Corollary (ii)]{WW}). 
From Lemma \ref{lem:pi4}, we concludes the theorem. 
\end{proof}

\begin{rem}
Combining \eqref{eq:formula2} and \eqref{eq:formula},
we can assert that $\tau_{2,p}$ and $\tau_{p^*,p}$ satisfy the multiple-angle formula
$$\tau_{2,p}{(2^{2/p}x)}=\frac{2^{2/p}\tau_{p^*,p}{(x)}}{1-|\tau_{p^*,p}{(x)}|^p},$$
which coincides with that of the tangent function if $p=2$.
\end{rem}


\section{Applications}
\label{sec:appl}


\subsection{An alternative proof of \eqref{eq:EGL}}

Let us give 
an alternative proof of the multiple-angle formula
of $\sin_{4/3,4}{x}$:
\begin{equation}
\sin_{4/3,4}{(2x)}=\frac{2\sin_{4/3,4}{x}\cos_{4/3,4}^{1/3}{x}}
{\sqrt{1+4\sin_{4/3,4}^4{x}\cos_{4/3,4}^{4/3}{x}}},
\quad 0 \leq x <\frac{\pi_{4/3,4}}{4}, \tag{\ref{eq:EGL}}
\end{equation}
which was discovered by Edmunds, Gurka and Lang \cite{EGL}.


Recall that $\sin_{2,4}{x}$ is equal to the
lemniscate function $\slem{x}$.
Applying \eqref{eq:main1} in case $p=4$
with $x$ replaced by $2x \in [0,\pi_{4/3,4}/2)$, we get
\begin{equation}
\label{eq:cubic}
\slem{(2\sqrt{2}x)}=\sqrt{2}\sin_{4/3,4}{(2x)}(1-\sin_{4/3,4}^4{(2x)})^{1/4}.
\end{equation}

First, we consider the case 
$$0 \leq x <\frac{\pi_{4/3,4}}{8}=\frac{\varpi}{4\sqrt{2}}.$$
Then, since $0 \leq 2\sin_{4/3,4}^4{(2x)}<1$ by Lemma \ref{lem:pi4}, 
the equation \eqref{eq:cubic} gives
$$2\sin_{4/3,4}^4{(2x)}
=1-\sqrt{1-\slem^4{(2\sqrt{2}x)}}.$$
Set $S=S(x):=\slem{(\sqrt{2}x)}$. Using the multiple-angle formula \eqref{eq:maflem} 
for the lemniscate function, we have
\begin{align*}
2\sin_{4/3,4}^4{(2x)}
&=1-\sqrt{1-\left(\frac{2S\sqrt{1-S^4}}{1+S^4}\right)^4}\\
&=1-\frac{\sqrt{1-12S^4+38S^8-12S^{12}+S^{16}}}{(1+S^4)^2}\\
&=1-\frac{|1-6S^4+S^8|}{(1+S^4)^2}.
\end{align*}
Since $0 \leq S <\slem{(\varpi/4)}=(3-2\sqrt{2})^{1/4}$, 
evaluated by \eqref{eq:maflem},
we see that $1-6S^4+S^8 \geq 0$. 
Thus, 
\begin{equation}
\label{eq:case1}
2\sin_{4/3,4}^4{(2x)}
=1-\frac{1-6S^4+S^8}{(1+S^4)^2}=\frac{8S^4}{(1+S^4)^2}.
\end{equation}
Therefore, by \eqref{eq:main1}, 
$$\sin_{4/3,4}{(2x)}=\frac{\sqrt{2}S}{\sqrt{1+S^4}}
=\frac{2\sin_{4/3,4}{x}\cos_{4/3,4}^{1/3}{x}}
{\sqrt{1+4\sin_{4/3,4}^4{x}\cos_{4/3,4}^{4/3}{x}}}.$$

In the remaining case 
$$\frac{\varpi}{4\sqrt{2}}=\frac{\pi_{4/3,4}}{8} 
\leq x <\frac{\pi_{4/3,4}}{4}=\frac{\varpi}{2\sqrt{2}},$$
it follows easily that $1 \leq 2\sin_{4/3,4}^4{(2x)}<2$ and $1-6S^4+S^8 \leq 0$,
hence we obtain \eqref{eq:case1} again.
The proof of \eqref{eq:EGL} is complete.

\subsection{A relation between eigenvalue problems of the $p$-Laplacian and that of 
the Laplacian}

\begin{thm}
Let $n \in \mathbb{N}$ and $p \in (1,\infty)$.
Let $u$ be an eigenfunction with $(n-1)$-zeros in $(0,L)$ 
for an eigenvalue $\lambda>0$ of the eigenvalue problem
\begin{equation}
\label{eq:dirichlet}
-(|u'|^{p-2}u')'=\lambda |u|^{p^*-2}u, \quad u(0)=u(L)=0,
\end{equation}
and $v$ an eigenfunction with $n$-zeros in $(0,L)$ 
for an eigenvalue $\mu>0$ of the eigenvalue problem
\begin{equation}
\label{eq:neumann}
-(|v'|^{p-2}v')'=\mu |v|^{p^*-2}v, \quad v'(0)=v'(L)=0.
\end{equation}
Then, the product $w=uv$ is an 
eigenfunction for the eigenvalue $\xi=2p^* (\lambda \mu)^{1/p}$ 
with $(2n-1)$-zeros in $(0,L)$ of the eigenvalue problem
\begin{equation}
\label{eq:laplace}
-w''=\xi |w|^{p^*-2}w,\quad w(0)=w(L)=0.
\end{equation}
\end{thm}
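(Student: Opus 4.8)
The plan is to reduce everything to the explicit closed forms of the eigenfunctions and then invoke the multiple-angle formula \eqref{eq:formula2}. First I would recall from \eqref{eq:ev}--\eqref{eq:ef} that, up to scalar multiples (which do not affect the eigenfunction property), the Dirichlet eigenfunction $u$ with $(n-1)$ interior zeros must be of the form $u(x)=A\sin_{p,p^*}\!\big(\tfrac{n\pi_{p,p^*}}{L}x\big)$ for a suitable constant $A$, with $\lambda$ the corresponding eigenvalue from \eqref{eq:ev} (here the role of $q$ in the Introduction is played by $p^*$, and of $p$ by $p$). For the Neumann problem \eqref{eq:neumann}, a parallel classification applies: the eigenfunctions with $n$ interior zeros are the ``cosine-type'' solutions, i.e. $v(x)=B\,|\cos_{p,p^*}|^{\,\cdot}\cos_{p,p^*}\!\big(\tfrac{n\pi_{p,p^*}}{L}x\big)$ — more precisely, using \eqref{eq:diffsin}, the function $t\mapsto |\cos_{p,p^*}t|^{p-2}\cos_{p,p^*}t$ has derivative a multiple of $|\sin_{p,p^*}t|^{p^*-2}\sin_{p,p^*}t$, so the Neumann eigenfunction is $v(x)=B\,|\cos_{p,p^*}(\tfrac{n\pi_{p,p^*}}{L}x)|^{p^*-2}\cos_{p,p^*}(\tfrac{n\pi_{p,p^*}}{L}x)$ up to the appropriate reparametrization; I would pin down the exact exponent and constant by differentiating and matching \eqref{eq:neumann}, and read off $\mu$.

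Next I would form the product $w=uv$ and apply Theorem \ref{thm:main}. Writing $\omega=\tfrac{n\pi_{p,p^*}}{L}$, the product $uv$ is (a constant times) $\sin_{p^*,p}(\omega x)\,|\cos_{p^*,p}(\omega x)|^{p^*-2}\cos_{p^*,p}(\omega x)$ — note the parameter order here is $(p^*,p)$, matching the hypotheses of Theorem \ref{thm:main}. By \eqref{eq:formula2}, this equals a constant times $\sin_{2,p}(2^{2/p}\omega x)$. In particular $w(x)=C\sin_{2,p}(2^{2/p}\omega x)$ for an explicit constant $C$. By \eqref{eq:key}, $2^{2/p}\omega = 2^{2/p}\cdot\tfrac{n\pi_{p^*,p}}{L} = \tfrac{2n\pi_{2,p}}{L}$, so $w(x)=C\sin_{2,p}\!\big(\tfrac{2n\pi_{2,p}}{L}x\big)$, which is exactly the form \eqref{eq:ef} of an eigenfunction of \eqref{eq:laplace} with $2n$ half-periods on $[0,L]$, hence with $2n-1$ interior zeros, and $w(0)=w(L)=0$. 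Finally I would compute the eigenvalue: plugging $q=p^*$ (so $q/p^*=p^*-1$... rather, the $``p"$ in \eqref{eq:laplace} is $2$ and the $``q"$ is $p^*$) into \eqref{eq:ev} gives $\xi=\tfrac{p^*}{2}\big(\tfrac{2n\pi_{2,p}}{L}\big)^{2}$ divided/scaled appropriately; I would then check via \eqref{eq:ev} applied to $u$ and to $v$ that $(\lambda\mu)^{1/p}$ produces precisely $\xi/(2p^*)$, i.e. that $\xi = 2p^*(\lambda\mu)^{1/p}$, using that both $\lambda$ and $\mu$ carry the same factor $\big(\tfrac{n\pi_{p,p^*}}{L}\big)^p$ and that $\tfrac{q}{p^*}$-type constants multiply correctly under the square root.

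The routine but slightly delicate bookkeeping is keeping straight which parameter plays the role of $p$ and which of $q$ in each of the three problems — \eqref{eq:dirichlet} and \eqref{eq:neumann} have $(p,p^*)$ while \eqref{eq:laplace} has $(2,p^*)$ — and tracking the multiplicative constants $A,B,C$ so that the nonlinear right-hand sides balance. The main genuine obstacle is the Neumann classification: I need the fact that the eigenfunctions of \eqref{eq:neumann} with exactly $n$ interior zeros are, up to scaling and translation, the ``$\cos_{p,p^*}$-type'' functions, which is the Neumann analogue of the Dr\'abek--Man\'asevich Dirichlet classification quoted in \eqref{eq:ev}--\eqref{eq:ef}; I would either cite this or derive it by the same shooting/scaling argument, noting that $v'=0$ at the endpoints forces $v$ to start at a local extremum, which is exactly where $\sin_{p,p^*}$ of the shifted argument vanishes, i.e. where the $\cos_{p,p^*}$-type primitive is extremal. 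Once that normal form is in hand, everything else is substitution into \eqref{eq:formula2}, \eqref{eq:key} and \eqref{eq:ev}.
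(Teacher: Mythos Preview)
Your approach is exactly the one the paper takes: write $u$ and $v$ in closed form via \eqref{eq:ev}--\eqref{eq:ef}, apply the multiple-angle formula \eqref{eq:formula2} to the product, convert the frequency via \eqref{eq:key}, and read off the eigenvalue. Two points to tighten. First, your parameter bookkeeping slips in the middle paragraph: since the Dirichlet/Neumann problems carry parameters $(p,p^*)$, Theorem~\ref{thm:main} must be applied with $p$ replaced by $p^*$, so the product becomes a multiple of $\sin_{2,p^*}\!\big(\tfrac{2n\pi_{2,p^*}}{L}x\big)$ (not $\sin_{2,p}$), the scaling factor is $2^{2/p^*}$ (not $2^{2/p}$), and the correct Neumann exponent is $p-2$ (not $p^*-2$), i.e.\ $v(x)=Q\,|\cos_{p,p^*}(\omega x)|^{p-2}\cos_{p,p^*}(\omega x)$. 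You already flag $(2,p^*)$ as the target parameters in your last paragraph, so this is just a matter of making the middle consistent. Second, the ``main obstacle'' you identify --- the Neumann classification --- is handled in the paper in one line via the symmetry \eqref{eq:sym} (with $p\leftrightarrow p^*$): $\sin_{p,p^*}\!\big(\tfrac{\pi_{p,p^*}}{2}-x\big)=\cos_{p,p^*}^{p-1}x$, so the Neumann eigenfunction is literally a quarter-period shift of the Dirichlet one, and no separate shooting argument is needed.
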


\begin{proof}
By \eqref{eq:ev} and \eqref{eq:ef}, 
the solution $(\lambda,u)$
of \eqref{eq:dirichlet} can be expressed
as follows:
\begin{gather*}
\lambda=\left(\frac{n\pi_{p,p^*}}{L}\right)^p|R|^{p-p^*},\\
u(x)=R\sin_{p,p^*}\left(\frac{n\pi_{p,p^*}}{L}x\right), \quad R \neq 0.
\end{gather*}
Similarly, by the symmetry \eqref{eq:sym},
the solution $(\mu,v)$ of \eqref{eq:neumann} 
is represented as 
\begin{gather*}
\mu=\left(\frac{n\pi_{p,p^*}}{L}\right)^p|Q|^{p-p^*},\\
v(x)=Q\left|\cos_{p,p^*}\left(\frac{n\pi_{p,p^*}}{L}x\right)\right|^{p-2}
\cos_{p,p^*}\left(\frac{n\pi_{p,p^*}}{L}x\right), \quad Q \neq 0.
\end{gather*}
Applying \eqref{eq:formula2} in Theorem \ref{thm:main}
and \eqref{eq:key}
to the product $w=uv$, we have
\begin{align*}
w(x)
&=RQ\sin_{p,p^*}\left(\frac{n\pi_{p,p^*}}{L}x\right)
\left|\cos_{p,p^*}\left(\frac{n\pi_{p,p^*}}{L}x\right)\right|^{p-2}
\cos_{p,p^*}\left(\frac{n\pi_{p,p^*}}{L}x\right)\\
&=2^{-2/p^*}RQ\sin_{2,p^*}\left(\frac{2n\pi_{2,p^*}}{L}x\right),
\end{align*}
which belongs to $C^2(\mathbb{R})$ and has $(2n-1)$-zeros in $(0,L)$. 
Therefore, by \eqref{eq:diffsin} with $p=2$,
a direct calculation shows
\begin{align}
\label{eq:w''}
w''&=-p^*2^{1-2/p^*} \left(\frac{n\pi_{2,p^*}}{L}\right)^2 RQ
\left|\sin_{2,p^*}\left(\frac{n\pi_{2,p^*}}{L}x\right)\right|^{p^*-2}
\sin_{2,p^*}\left(\frac{n\pi_{2,p^*}}{L}x\right) \notag \\
&=-p^*2^{3-4/p^*}\left(\frac{n\pi_{2,p^*}}{L}\right)^2
|RQ|^{2-p^*}|w|^{p^*-2}w.
\end{align}
On the other hand, \eqref{eq:key} gives  
\begin{equation}
\label{eq:lm}
(\lambda\mu)^{1/p}
=2^{2-4/p^*}\left(\frac{n\pi_{2,p^*}}{L}\right)^2|RQ|^{2-p^*}.
\end{equation}
Combining \eqref{eq:w''} and \eqref{eq:lm}, we obtain \eqref{eq:laplace}.
\end{proof}

\subsection{A pendulum-type equation}

We give an expression of the solution of the following initial value problem: 
\begin{equation}
\label{eq:p-pendulum}
-(|\theta'|^{p-2}\theta')'
=\lambda^p |\sin_{2,p}{\theta}|^{p-2}\sin_{2,p}{\theta}, \quad
\theta(0)=0, \ \theta'(0)=\omega_0.
\end{equation}

For $p,\ q \in (1,\infty)$ and $k \in [0,1)$ we define
$\am_{p,q}{(x,k)}$ by the inverse function of
$$\am_{p,q}^{-1}{(x,k)}:=\int_0^x \frac{d\theta}{(1-k^q|\sin_{p,q}{\theta}|^q)^{1/p^*}},
\quad -\infty<x<\infty,$$
in particular,
\begin{equation}
\label{eq:kpq}
K_{p,q}(k):=\am_{p,q}^{-1}\left(\frac{\pi_{p,q}}{2},k\right)
=\int_0^{\pi_{p,q}/2} \frac{d\theta}{(1-k^q\sin_{p,q}^q{\theta})^{1/p^*}}.
\end{equation}
Then, the function
$$\sn_{p,q}{(x,k)}:=\sin_{p,q}{(\am_{p,q}{(x,k)})}$$
is a $4K_{p,q}(k)$-periodic odd function in $\mathbb{R}$.
In case $p=q=2$, the functions $\am_{p,q},\ K_{p,q}$ and $\sn_{p,q}$
coincide with the amplitude function, the complete elliptic integral
of the first kind and the Jacobian elliptic function, respectively
(see \cite[\S 2.8]{PS} for them).
It is easy to see that 
$K_{p,q}(0)=\pi_{p,q}/2,\ 
\sn_{p,q}(x,0)=\sin_{p,q}{x},\ 
\lim_{k \to 1-0}K_{p,q}(k)=\infty$
and
$\lim_{k \to 1-0}\sn_{p,q}(x,k)=\tanh_q{x}$,
defined as the inverse function of
$$\tanh_q^{-1}{x}=\int_0^x \frac{dt}{1-|t|^q},\quad -1<x<1.$$
See \cite{T2} for details of the generalized Jacobian elliptic function.
Another type of generalization of the Jacobian elliptic function
also appears in a bifurcation problem of the $p$-Laplacian; see \cite{T}.

\begin{thm}
Let $p \in (1,\infty)$ and $\lambda,\ \omega_0 \in (0,\infty)$,
and set $k:=\omega_0/(2^{2/p} \lambda)$.
Then, the solution of \eqref{eq:p-pendulum}
is as follows: 
\begin{enumerate}
\item Case $k<1$.
\begin{equation*}
\theta (t)=2^{2/p} \sin_{p^*,p}^{-1}{\left(k
\sn_{p,p} \left(\lambda t,k\right)\right)},
\end{equation*}
which is a $4K_{p,p}(k)/\lambda$-periodic function and
$$\max_{0 \leq t<\infty}|\theta (t)|=2^{2/p} \sin_{p^*,p}^{-1}{k}.$$
\item Case $k=1$. 
\begin{equation*}
\theta (t)=2^{2/p} \sin_{p^*,p}^{-1}{(\tanh_p{(\lambda t)})},
\end{equation*}
which is a strictly monotone increasing function and
$$\lim_{t \to \infty}\theta (t)=2^{2/p-1}\pi_{p^*,p}=\pi_{2,p}.$$
\item Case $k>1$.
\begin{equation*}
\theta (t)=2^{2/p} \am_{p^*,p}{\left(k\lambda t,\frac1k \right)},
\end{equation*}
which is a strictly monotone increasing function and
$$\lim_{t \to \infty}\theta (t)=\infty.$$
\end{enumerate}
\end{thm}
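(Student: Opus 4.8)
The plan is the energy method followed by a recognition step: integrate \eqref{eq:p-pendulum} once, substitute $\theta=2^{2/p}\phi$ and use the multiple-angle identity \eqref{eq:formula} of Theorem~\ref{thm:main} to turn the resulting quadrature into the defining integral of one of the generalized elliptic functions introduced just above, and then select which one according to the position of $k$ relative to $1$. One could instead differentiate each of the three asserted formulas and verify \eqref{eq:p-pendulum} together with $\theta(0)=0$, $\theta'(0)=\omega_0$ by hand, appealing to uniqueness for the initial value problem; deriving them is more informative, as it accounts both for the rescaling factor $2^{2/p}$ and for the appearance of $\pi_{p^*,p}$ versus $\pi_{2,p}$.

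Concretely, I would multiply \eqref{eq:p-pendulum} by $\theta'$. Using the elementary identity $(|\theta'|^{p-2}\theta')'\theta'=\tfrac1{p^*}(|\theta'|^p)'$ together with $(\cos_{2,p}x)'=-\tfrac p2|\sin_{2,p}x|^{p-2}\sin_{2,p}x$ (a special case of \eqref{eq:diffsin}), both sides become total derivatives, so $\tfrac1{p^*}|\theta'|^p-\tfrac{2\lambda^p}{p}\cos_{2,p}\theta$ is constant; evaluating at $t=0$ expresses $|\theta'|^p$ as an explicit affine function of $\cos_{2,p}\theta$. Writing $\theta=2^{2/p}\phi$, applying \eqref{eq:formula} in the form $1-\cos_{2,p}(2^{2/p}\phi)=2|\sin_{p^*,p}\phi|^p$, and using $\omega_0=2^{2/p}\lambda k$, this reduces \eqref{eq:p-pendulum}, after an elementary rearrangement, to the separable first-order problem
\begin{equation*}
\phi'=\pm\lambda\bigl(k^p-|\sin_{p^*,p}\phi|^p\bigr)^{1/p},\qquad \phi(0)=0,\quad\phi'(0)=\lambda k>0 .
\end{equation*}
The three cases are now read off from this.

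If $k<1$, the bracket vanishes when $|\sin_{p^*,p}\phi|=k$, so $\phi$ oscillates between $\pm\sin_{p^*,p}^{-1}k$; putting $\sin_{p^*,p}\phi=k\sin_{p,p}\psi$, differentiating, and eliminating the cosines through the Pythagorean identities $\cos_{p^*,p}^{p^*}\phi+\sin_{p^*,p}^p\phi=1$ and $\cos_{p,p}^p\psi+\sin_{p,p}^p\psi=1$, the displayed equation becomes $\psi'=\lambda\bigl(1-k^p|\sin_{p,p}\psi|^p\bigr)^{1/p^*}$ with $\psi(0)=0$, which is exactly the equation defining $\psi=\am_{p,p}(\lambda t,k)$. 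Hence $\sin_{p^*,p}\phi=k\,\sn_{p,p}(\lambda t,k)$ and $\theta=2^{2/p}\sin_{p^*,p}^{-1}(k\,\sn_{p,p}(\lambda t,k))$; the period $4K_{p,p}(k)/\lambda$ follows from the period of $\sn_{p,p}$ and the amplitude $2^{2/p}\sin_{p^*,p}^{-1}k$ from the fact that $\sn_{p,p}(\cdot,k)$ sweeps $[-1,1]$. If $k=1$ the same substitution works with $\tanh_p$ in place of $k\,\sn_{p,p}$ (the quadrature becoming $\int dt/(1-|t|^p)$); then $\phi$ is strictly increasing, and since $\tanh_p(\lambda t)\to1$ one gets $\theta\to2^{2/p}\sin_{p^*,p}^{-1}1=2^{2/p-1}\pi_{p^*,p}=\pi_{2,p}$ by \eqref{eq:key}. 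If $k>1$ the bracket stays positive, $\phi$ is strictly increasing, and a direct computation shows that $\phi=\am_{p^*,p}(k\lambda t,1/k)$ satisfies $(\phi')^p=(k\lambda)^p\bigl(1-k^{-p}|\sin_{p^*,p}\phi|^p\bigr)$, i.e.\ the displayed equation, so $\theta=2^{2/p}\am_{p^*,p}(k\lambda t,1/k)$ and $\theta\to\infty$.

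The step I expect to be delicate is making the passage from the first integral to these closed forms rigorous across the critical points — the turning points $\phi=\pm\sin_{p^*,p}^{-1}k$ when $k<1$, and in every case the points where $\cos_{p^*,p}\phi=0$, at which the separation of variables is singular. One must check that the candidate $\theta$ is globally $C^1$ with $|\theta'|^{p-2}\theta'\in C^1$, so that it solves \eqref{eq:p-pendulum} in the proper sense rather than merely piecewise, and that \eqref{eq:p-pendulum} has a unique solution (away from turning points one may use $\theta$ as the independent variable, and the resulting equation for $|\theta'|^{p-2}\theta'$ has a locally Lipschitz right-hand side; at a turning point $|\sin_{2,p}\theta|^{p-2}\sin_{2,p}\theta\neq0$, so $|\theta'|^{p-2}\theta'$ crosses zero transversally and the continuation is forced). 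Both are supplied by the smoothness and periodicity properties of $\am_{p,q}$, $\sn_{p,q}$ and $\tanh_q$ recorded just before the statement; the remaining bookkeeping — tracking the powers of $2$ and trading $\pi_{p^*,p}$ for $\pi_{2,p}$ via \eqref{eq:key} — is routine.
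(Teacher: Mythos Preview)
Your proposal is correct and follows essentially the same route as the paper: multiply \eqref{eq:p-pendulum} by $\theta'$ to obtain the first integral, insert the multiple-angle identity \eqref{eq:formula} to reduce to the quadrature $t=\frac{1}{2^{2/p}\lambda}\int_0^{\theta}\frac{d\theta}{(k^p-|\sin_{p^*,p}(2^{-2/p}\theta)|^p)^{1/p}}$, and then make exactly the substitutions you describe in each of the three regimes (the paper writes the $k<1$ substitution as $\sin_{p^*,p}(2^{-2/p}\theta)=k\sin_{p,p}\varphi$, which is your $\sin_{p^*,p}\phi=k\sin_{p,p}\psi$). Your discussion of uniqueness and of the passage through turning points is somewhat more explicit than the paper's, which simply invokes standard references for global existence and uniqueness and then appeals to the periodicity of $\sn_{p,p}$ to extend past $T$.
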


\begin{proof}

By the standard argument (for example, \cite[Proposition 2.1]{DM} or
\cite[Theorem 3.1]{LE}), 
we can show that there exists a unique global
solution of \eqref{eq:p-pendulum}. 

Let $\theta$ be the solution of \eqref{eq:p-pendulum}
and $T:=\inf\{t:\theta'(t)=0\}$.
On the interval $(0,T)$, $\theta$ satisfies $\theta(t)>0$ and $\theta'(t)>0$.
Then, using \eqref{eq:diffsin} with $p=2$, we obtain
\begin{equation}
\label{eq:ee}
\frac1p \theta'(t)^p-\frac1p \omega_0^p
=\frac{2\lambda^p}{p} \cos_{2,p}\theta(t)-\frac{2\lambda^p}{p}.
\end{equation}
From \eqref{eq:formula} in Theorem \ref{thm:main} we have
\begin{equation}
\label{eq:mafc}
\cos_{2,p}\theta(t)
=1-2|\sin_{p^*,p}{(2^{-2/p}\theta(t))}|^p.
\end{equation}
Combining \eqref{eq:ee} and \eqref{eq:mafc} we obtain
$$\theta'(t)^p
=4\lambda^p(k^p-|\sin_{p^*,p}{(2^{-2/p}\theta(t))}|^p),$$
where $k=\omega_0/(2^{2/p} \lambda)$, hence, for $t \in [0,T]$,
\begin{equation}
\label{eq:t}
t=\frac{1}{2^{2/p} \lambda}
\int_0^{\theta(t)} \frac{d\theta}{(k^p-|\sin_{p^*,p}{(2^{-2/p}\theta)}|^p)^{1/p}}.
\end{equation}  

(i) Case $k<1$. 
We can find $\alpha \in (0,2^{2/p-1}\pi_{p^*,p})$ such that
$k=\sin_{p^*,p}(2^{-2/p}\alpha)$ and $T=\theta^{-1}{(\alpha)}$. 
Letting $\sin_{p^*,p}{(2^{-2/p}\theta)}=k\sin_{p,p}{\vp}$ 
in \eqref{eq:t}, we obtain
\begin{equation*}
t=\frac{1}{\lambda}
\int_0^{\vp(t)} \frac{d\vp}{(1-k^p|\sin_{p,p}{\vp}|^p)^{1/p^*}},
\end{equation*}
which implies
$$\vp(t)=\am_{p,p}(\lambda t,k).$$
Therefore,
$$\theta(t)=2^{2/p} \sin_{p^*,p}^{-1}{(k\sn_{p,p}(\lambda t,k))}.$$
We have thus found the unique solution $\theta$ of \eqref{eq:p-pendulum} in $[0,T]$.
However, in view of the periodicity properties of $\sn_{p,p}$, this function $\theta$
is actually the unique global solution of \eqref{eq:p-pendulum}, 
which is periodic of $4T=4K_{p,p}(k)/\lambda$ and 
whose maximum value is $\alpha=2^{2/p} \sin_{p^*,p}^{-1}{k}$.

(ii) Case $k=1$. In this case, letting $\sin_{p^*,p}(2^{-2/p}\theta)=x$
in \eqref{eq:t}, we obtain
$$t=\frac{1}{\lambda} \int_0^{x(t)} \frac{dx}{1-|x|^p}=\frac{1}{\lambda}\tanh_p^{-1}{x(t)}.$$
Therefore,
$$\theta(t)=2^{2/p} \sin_{p^*,p}^{-1}{(\tanh_p{(\lambda t)})}$$
and $T=\infty$. Moreover, by \eqref{eq:key}
$$\lim_{t \to \infty}\theta(t)
=2^{2/p} \sin_{p^*,p}^{-1}{1}=2^{2/p-1} \pi_{p^*,p}= \pi_{2,p}.$$

(iii) Case $k>1$. In this case, \eqref{eq:t} becomes
\begin{align*}
t
&=\frac{1}{2^{2/p} k \lambda}
\int_0^{\theta(t)} \frac{d\theta}{(1-k^{-p}|\sin_{p^*,p}{(2^{-2/p}\theta)}|^p)^{1/p}}\\
&=\frac{1}{k \lambda}
\int_0^{\vp(t)} \frac{d\vp}{(1-k^{-p}|\sin_{p^*,p}{\vp}|^p)^{1/p}}\\
&=\frac{1}{k \lambda}
\am_{p^*,p}^{-1}\left(\vp(t),\frac1k\right).
\end{align*}
Therefore, 
$$\theta(t)=2^{2/p} \am_{p^*,p}{\left(k\lambda t,\frac1k\right)}$$
and $T=\infty$. It is obvious that $\lim_{t \to \infty}\theta(t)=\infty$.
\end{proof}

\begin{rem}
The solution $\theta (t)$ in (ii) does not attain $\pi_{2,p}$ for any finite $t$,
while equations with $p$-Laplacian sometimes have flat-core 
solutions (cf. \cite{T}).
\end{rem}

\subsection{Catalan-type constants}

We define
$$G_p:=\sum_{n=0}^\infty \frac{(2/p)_n}{n!}\frac{(-1)^n}{(pn+1)^2}.$$
It is clear that $G_2=G$, i.e. Catalan's constant described in Introduction.

\begin{thm}
\label{thm:Gp}
Let $p \in (1,\infty)$, then
\begin{equation}
\label{eq:Gp}
G_p
=\frac{1}{2^{2/p}} \int_0^{\pi_{2,p}/2} \frac{x}{\sin_{2,p}{x}}\,dx
=\frac{1}{2^{2/p}} \int_0^1 K_{2,p}(k)\,dk,
\end{equation}
where $K_{2,p}(k)$ is defined by \eqref{eq:kpq}.
\end{thm}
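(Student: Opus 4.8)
The plan is to derive both equalities in \eqref{eq:Gp} from the multiple-angle formula \eqref{eq:main1} together with the series \eqref{eq:tan} for $\tau_{p^*,p}^{-1}$. I would start from the series definition of $G_p$ and recognize that, up to a factor, it is obtained from the power series in \eqref{eq:tan} by dividing the $n$-th term by an extra factor $pn+1$; the standard device for producing such a factor is integration, since $\int_0^1 x^{pn}\,dx = 1/(pn+1)$. Concretely, from \eqref{eq:tan} we have $\tau_{p^*,p}^{-1}(x)/x = \sum_{n=0}^\infty \frac{(2/p)_n}{n!}\frac{(-1)^n x^{pn}}{pn+1}$, so integrating term by term over $x \in [0,1]$ (justified by Abel's theorem / uniform convergence on $[0,1-\delta]$ plus the monotone-alternating tail estimate already established in the proof of Theorem \ref{prop:gls}) gives
\[
\int_0^1 \frac{\tau_{p^*,p}^{-1}(x)}{x}\,dx = \sum_{n=0}^\infty \frac{(2/p)_n}{n!}\frac{(-1)^n}{(pn+1)^2} = G_p.
\]
So the first task reduces to showing $\int_0^1 \tau_{p^*,p}^{-1}(x)/x\,dx = 2^{-2/p}\int_0^{\pi_{2,p}/2} x/\sin_{2,p}x\,dx$, which should be a change of variables.

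The second task: change variables in $\int_0^1 \tau_{p^*,p}^{-1}(x)\,x^{-1}\,dx$ by setting $u = \tau_{p^*,p}^{-1}(x)$, i.e. $x = \tau_{p^*,p}(u) = \sin_{p^*,p}u/\cos_{p^*,p}^{p^*-1}u$, for $u$ running over $[0,\pi_{p^*,p}/4]$ (by Lemma \ref{lem:pi4}, $x=1$ corresponds to $u = \pi_{p^*,p}/4$; note only half the interval $[0,\pi_{2,p}/2]$ is covered, matching the factor-of-two bookkeeping). Here the integrand $\tau_{p^*,p}^{-1}(x)/x = u/\tau_{p^*,p}(u)$ and $dx = (\tau_{p^*,p}^{-1})'(x)^{-1}$... more cleanly, $\tau_{p^*,p}^{-1}{}'(x) = (1+x^p)^{-2/p}$ from \eqref{eq:tan}, so $dx = (1+\tau_{p^*,p}^2(u)^{?})\dots$ — I would instead push everything through \eqref{eq:main1}. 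Using \eqref{eq:main1}, $\sin_{2,p}(2^{2/p}u) = 2^{2/p}\sin_{p^*,p}u\cos_{p^*,p}^{p^*-1}u$, and from \eqref{eq:pe} one reads off $\tau_{p^*,p}(u) = $ (up to the substitution identity) the quantity $y(4(1-y^p))^{1/p}/(2^{2/p}\cdot\text{something})$; the cleanest route is: in $\int_0^{\pi_{2,p}/2} x/\sin_{2,p}x\,dx$ substitute $x = 2^{2/p}u$ with $u\in[0,\pi_{2,p}/2^{1+2/p}] = [0,\pi_{p^*,p}/4]$ by \eqref{eq:key}, giving $2^{4/p}\int_0^{\pi_{p^*,p}/4} u/\sin_{2,p}(2^{2/p}u)\,du = 2^{4/p}\int_0^{\pi_{p^*,p}/4} u\,(2^{2/p}\sin_{p^*,p}u\cos_{p^*,p}^{p^*-1}u)^{-1}\,du = 2^{2/p}\int_0^{\pi_{p^*,p}/4} u/\tau_{p^*,p}(u)\cdot \cos_{p^*,p}^{-2(p^*-1)}u\,du$. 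Hmm, this leaves an extra $\cos$ factor, so the substitution in the $x$-integral should instead be $x = \tau_{p^*,p}^{-1}(\text{new var})$ directly. I would set $s = \sin_{p^*,p}(2^{-2/p}x)$ so that by \eqref{eq:pe}, $\sin_{2,p}x = 2^{2/p} s(1-s^p)^{1/p^*}/(\text{??})$... — the right bookkeeping is: $\sin_{2,p}^{-1}(\text{arg}) $ vs $\tau$. I will reconcile this by writing $\tau_{p^*,p}(u) = \sin_{p^*,p}u\,\cos_{p^*,p}^{1-p^*}u$ and noting $\sin_{2,p}(2^{2/p}u) = 2^{2/p}\tau_{p^*,p}(u)\cos_{p^*,p}^{2(p^*-1)}u = 2^{2/p}\tau_{p^*,p}(u)(1-\sin_{p^*,p}^p u)^{2/p^*}$; combined with $\tau_{p^*,p}^{-1}{}'(x) = (1+x^p)^{-2/p}$ and the algebraic identity $(1+\tau_{p^*,p}^2)\dots$, the extra cosine powers will cancel. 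The routine-but-fiddly verification of this cancellation is the main obstacle; once it is in hand, $2^{-2/p}\int_0^{\pi_{2,p}/2} x/\sin_{2,p}x\,dx = \int_0^1 \tau_{p^*,p}^{-1}(t)/t\,dt = G_p$.

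For the third expression, $2^{-2/p}\int_0^1 K_{2,p}(k)\,dk$, I would go back to the definition \eqref{eq:kpq}: $K_{2,p}(k) = \int_0^{\pi_{2,p}/2} (1-k^p\sin_{2,p}^p\theta)^{-1/p^*}\,d\theta$. Integrating in $k$ first over $[0,1]$: $\int_0^1 (1-k^p\sin_{2,p}^p\theta)^{-1/p^*}\,dk$. With $\sin_{2,p}\theta =: a \in [0,1]$ fixed, substitute $k = v/a$ (so $kv$-range is $[0,a]$)... actually $\int_0^1 (1-(ak)^p)^{-1/p^*}dk = a^{-1}\int_0^a(1-w^p)^{-1/p^*}dw = a^{-1}\sin_{p^*,p}^{-1}(a) = a^{-1}\sin_{p^*,p}^{-1}(\sin_{2,p}\theta)$, where I used the defining integral of $\sin_{p^*,p}^{-1}$. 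Hence by Fubini (everything nonnegative),
\[
\int_0^1 K_{2,p}(k)\,dk = \int_0^{\pi_{2,p}/2} \frac{\sin_{p^*,p}^{-1}(\sin_{2,p}\theta)}{\sin_{2,p}\theta}\,d\theta.
\]
Now substitute $\theta = 2^{2/p}u$ and use \eqref{eq:main1} in the form $\sin_{2,p}(2^{2/p}u) = 2^{2/p}\sin_{p^*,p}u\cos_{p^*,p}^{p^*-1}u$, noting that $\sin_{p^*,p}^{-1}$ of the right-hand side needs care — this is where \eqref{eq:pe}/\eqref{eq:main1} must be used to identify $\sin_{p^*,p}^{-1}(\sin_{2,p}(2^{2/p}u))$ with $2^{2/p}u$ only after correctly tracking the argument, since $\sin_{2,p}(2^{2/p}u)$ is not $\sin_{p^*,p}$ of anything obvious. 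The honest move is: in $\int_0^{\pi_{2,p}/2}\sin_{p^*,p}^{-1}(\sin_{2,p}\theta)/\sin_{2,p}\theta\,d\theta$, substitute $t = \sin_{2,p}\theta$, which on $[0,\pi_{2,p}/2]$ is a diffeomorphism onto $[0,1]$ with $d\theta = dt/\cos_{2,p}\theta = dt/(1-t^p)^{1/p}$, giving $\int_0^1 \sin_{p^*,p}^{-1}(t)\,(t(1-t^p)^{1/p})^{-1}\,dt$; then substitute $t = \sin_{p^*,p}r$, $dt = \cos_{p^*,p}r\,dr = (1-\sin_{p^*,p}^p r)^{1/p^*}\,dr = (1-t^p)^{1/p^*}\,dr$ for $r\in[0,\pi_{p^*,p}/2]$, yielding $\int_0^{\pi_{p^*,p}/2} r\,(1-\sin_{p^*,p}^p r)^{1/p^* - 1/p}/\sin_{p^*,p}r\,dr$. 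Since $1/p^* - 1/p = 1 - 2/p$... this still has a leftover power of $(1-\sin_{p^*,p}^p r)$ unless $p=2$, so the clean identification must instead come directly from comparing with the $x/\sin_{2,p}x$ integral already computed; i.e. I would show $\int_0^1 K_{2,p}(k)\,dk = \int_0^{\pi_{2,p}/2} x/\sin_{2,p}x\,dx$ by demonstrating that both equal $2^{2/p}\int_0^1\tau_{p^*,p}^{-1}(t)/t\,dt$, recycling the Fubini computation above in tandem with \eqref{eq:pe}. The main obstacle throughout is keeping the powers of $\cos_{p^*,p}$ / $(1-t^p)$ straight across the nested substitutions; conceptually nothing is deep, but the exponent arithmetic must land exactly on $1-2/p$ versus $2/p$ in the right places, and this is where I would spend the care.
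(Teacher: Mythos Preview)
Your overall strategy is exactly the paper's: write $G_p=\int_0^1 \tau_{p^*,p}^{-1}(x)/x\,dx$ via \eqref{eq:tan}, then change variables using \eqref{eq:main1} and \eqref{eq:key}; and use Fubini for the $K_{2,p}$ equality. But two concrete slips are derailing you.

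\emph{First equality.} Your ``extra $\cos$ factor'' is not extra at all---it is precisely the Jacobian. From $\tau_{p^*,p}(u)=\sin_{p^*,p}u/\cos_{p^*,p}^{p^*-1}u$ and \eqref{eq:pyta} one computes $\tau_{p^*,p}'(u)=\cos_{p^*,p}^{-2(p^*-1)}u$. Hence with $t=\tau_{p^*,p}(u)$,
\[
\int_0^1 \frac{\tau_{p^*,p}^{-1}(t)}{t}\,dt
=\int_0^{\pi_{p^*,p}/4}\frac{u}{\tau_{p^*,p}(u)}\,\cos_{p^*,p}^{-2(p^*-1)}u\,du,
\]
which is exactly the expression you obtained from $2^{-2/p}\int_0^{\pi_{2,p}/2} x/\sin_{2,p}x\,dx$ after substituting $x=2^{2/p}u$ and applying \eqref{eq:main1}. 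So the first equality closes immediately; no further ``fiddly'' exponent balancing is needed.

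\emph{Second equality.} Here you misread the exponent in $K_{2,p}$. In the definition \eqref{eq:kpq} the exponent is $1/p^*$ with the \emph{first} index playing the role of $p$; for $K_{2,p}$ this is $1/2^*=1/2$, not $1/p^*$. With the correct exponent, the inner $k$-integral is
\[
\int_0^1 (1-k^p\sin_{2,p}^p\theta)^{-1/2}\,dk
=\frac{1}{\sin_{2,p}\theta}\int_0^{\sin_{2,p}\theta}(1-t^p)^{-1/2}\,dt
=\frac{\sin_{2,p}^{-1}(\sin_{2,p}\theta)}{\sin_{2,p}\theta}
=\frac{\theta}{\sin_{2,p}\theta},
\]
so Fubini gives $\int_0^1 K_{2,p}(k)\,dk=\int_0^{\pi_{2,p}/2}\theta/\sin_{2,p}\theta\,d\theta$ directly. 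Your detour through $\sin_{p^*,p}^{-1}$ and the leftover power $(1-\sin_{p^*,p}^p r)^{1/p^*-1/p}$ is an artifact of the wrong exponent; with $1/2$ in place, there is nothing to reconcile.
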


\begin{proof}
By \eqref{eq:tan},
$$\int_0^1 \frac{\tau_{p^*,p}^{-1}{(x)}}{x}\,dx
=\sum_{n=0}^\infty \frac{(2/p)_n(-1)^n}{n!} \int_0^1 \frac{x^{pn}}{pn+1}\,dx
=G_p.$$
On the other hand, letting $\tau_{p^*,p}^{-1}{(x)}=2^{-2/p}y$,  we obtain
\begin{align*}
\int_0^1 \frac{\tau_{p^*,p}^{-1}{(x)}}{x}\,dx
&=\frac{1}{2^{4/p}}\int_0^{2^{2/p-2}\pi_{p^*,p}}
\frac{y}{\sin_{p^*,p}{(2^{-2/p}y)}\cos_{p^*,p}^{p^*-1}{(2^{-2/p}y)}}\,dy\\
&=\frac{1}{2^{2/p}} \int_0^{\pi_{2,p}/2} \frac{y}{\sin_{2,p}{y}}\,dy.
\end{align*}
Here, we have used \eqref{eq:key} and \eqref{eq:main1}.
This shows the first equality in \eqref{eq:Gp}.

The second equality in \eqref{eq:Gp} follows from Fubini's theorem that
\begin{align*}
\int_0^1 K_{2,p}(k)\,dk
&=\int_0^{\pi_{2,p}/2} \int_0^1
\frac{1}{(1-k^p\sin_{2,p}^p{x})^{1/2}}\,dk\,dx\\
&=\int_0^{\pi_{2,p}/2} \frac{1}{\sin_{2,p}{x}}
\int_0^{\sin_{2,p}{x}} \frac{1}{(1-t^p)^{1/2}}\, dt\, dx\\
&=\int_0^{\pi_{2,p}/2} \frac{\sin_{2,p}^{-1}{(\sin_{2,p}{x})}}
{\sin_{2,p}{x}}\,dx\\
&=\int_0^{\pi_{2,p}/2} \frac{x}
{\sin_{2,p}{x}}\,dx.
\end{align*}
The proof is accomplished.
\end{proof}

By Theorem \ref{thm:Gp} with $p=4$, we obtain

\begin{cor}
$$\frac{1}{\sqrt{2}} \int_0^{\varpi/2} \frac{x}{\slem{x}}\,dx
=\sum_{n=0}^\infty \frac{(1/2)_n}{n!} \frac{(-1)^n}{(4n+1)^2}.$$
\end{cor}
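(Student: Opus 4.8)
The plan is to derive the Corollary directly from Theorem \ref{thm:Gp} by specializing $p=4$ and simplifying every ingredient. First I would note that with $p=4$ we have $p^*=4/3$, and by \eqref{eq:key} together with $\pi_{2,4}=\varpi$ the upper limit becomes $\pi_{2,4}/2=\varpi/2$ and the prefactor $2^{-2/p}=2^{-1/2}=1/\sqrt2$. Since $\sin_{2,4}{x}=\slem{x}$ by the identification recalled in the Introduction, the middle member of \eqref{eq:Gp} reads
\[
\frac{1}{\sqrt2}\int_0^{\varpi/2}\frac{x}{\slem{x}}\,dx,
\]
which is exactly the left-hand side of the Corollary.

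Next I would evaluate the series side $G_4$. By definition $G_p=\sum_{n=0}^\infty \frac{(2/p)_n}{n!}\frac{(-1)^n}{(pn+1)^2}$, so setting $p=4$ gives $(2/p)_n=(1/2)_n$ and $pn+1=4n+1$, hence
\[
G_4=\sum_{n=0}^\infty\frac{(1/2)_n}{n!}\frac{(-1)^n}{(4n+1)^2},
\]
which is precisely the right-hand side of the Corollary. Combining the two computations with the equality $G_p=2^{-2/p}\int_0^{\pi_{2,p}/2}(x/\sin_{2,p}{x})\,dx$ from Theorem \ref{thm:Gp} finishes the proof.

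Since this is nothing more than a substitution into an already-proved identity, there is essentially no obstacle; the only point requiring a word of care is making explicit the three substitutions $p^*=4/3$, $2^{-2/4}=1/\sqrt2$, and $\pi_{2,4}=\varpi$ (the last via \eqref{eq:key} or the direct identification $\pi_{2,4}=\varpi$ noted in the Introduction), together with $\sin_{2,4}=\slem$. One could also mention, as a sanity check, that the third member $2^{-2/p}\int_0^1 K_{2,p}(k)\,dk$ of \eqref{eq:Gp} specializes in the same way, giving the additional representation $\frac{1}{\sqrt2}\int_0^1 K_{2,4}(k)\,dk$ for the same constant, though this is not needed for the statement as written.
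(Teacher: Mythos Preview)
Your proposal is correct and follows exactly the paper's own approach: the Corollary is stated immediately after Theorem~\ref{thm:Gp} with the one-line justification ``By Theorem~\ref{thm:Gp} with $p=4$, we obtain,'' and your argument spells out precisely the substitutions $2^{-2/4}=1/\sqrt2$, $\pi_{2,4}=\varpi$, $\sin_{2,4}=\slem$, and $(2/p)_n=(1/2)_n$ that make this specialization explicit.
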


\begin{rem}
In a similar way to \cite[\S 3.2]{BE} or the last paragraph of \cite[\S 2.1]{LE}, 
we can also obtain the formula
\begin{equation}
\label{eq:Cp}
\int_0^{\pi_{2,p}/2} \frac{x}{\sin_{2,p}{x}}\,dx
=\frac{\pi_{2,p}}{2}\sum_{n=0}^\infty
\frac{(1/2)_n (1/p)_n}{(1/2+1/p)_n n!} \frac{1}{pn+1}=:\frac{\pi_{2,p}}{2}C_p.
\end{equation}
Therefore, from \eqref{eq:Gp}, \eqref{eq:Cp} and \eqref{eq:key},
\begin{equation}
\label{eq:pip*}
\frac{\pi_{p^*,p}}{4}=\frac{\pi_{2,p}}{2^{2/p+1}}
=\frac{G_p}{C_p}
=\frac{\sum_{n=0}^\infty \frac{(2/p)_n}{n!}\frac{(-1)^n}{(pn+1)^2}}
{\sum_{n=0}^\infty \frac{(1/2)_n (1/p)_n}{(1/2+1/p)_n n!} \frac{1}{pn+1}},
\end{equation}
particulary,
$$\frac{\pi}{4}=\frac{\sum_{n=0}^\infty \frac{(-1)^n}{(2n+1)^2}}
{\sum_{n=0}^\infty \left(\frac{(1/2)_n}{n!}\right)^2 \frac{1}{2n+1}}.$$
\end{rem}

\subsection{Series expansions of the lemniscate constant $\varpi$}

The series of Proposition \ref{prop:gls} for $p=2$ is nothing but the Gregory-Leibniz series.
Letting $p=4$ and using \eqref{eq:key}, we have the expansion series 
for the lemniscate constant $\varpi$:
\begin{equation}
\label{eq:lpi1}
\frac{\varpi}{2\sqrt{2}}
=1-\frac{1}{10}+\frac{1}{24}-\frac{5}{208}+\cdots
+\frac{(2n-1)!!}{(2n)!!}\frac{(-1)^n}{4n+1}+\cdots.
\end{equation}
On the other hand, there is the similar series to this in 
\cite[Theorem 5]{To}:
\begin{equation}
\label{eq:lpi2}
\frac{\varpi}{2}
=1+\frac{1}{10}+\frac{1}{24}+\frac{5}{208}+\cdots
+\frac{(2n-1)!!}{(2n)!!}\frac{1}{4n+1}+\cdots.
\end{equation}
Combining \eqref{eq:lpi1} and \eqref{eq:lpi2}, we obtain
\begin{align*}
\frac{2+\sqrt{2}}{8}\varpi
&=1+\frac{1}{24}+\cdots
+\frac{(4n-1)!!}{(4n)!!}\frac{1}{8n+1}+\cdots,\\
\frac{2-\sqrt{2}}{8}\varpi
&=\frac{1}{10}+\frac{5}{208}+\cdots
+\frac{(4n+1)!!}{(4n+2)!!}\frac{1}{8n+5}+\cdots.
\end{align*}
Finally, letting $p=4$ in \eqref{eq:pip*} we obtain
$$\frac{\varpi}{2\sqrt{2}}=
\frac{\sum_{n=0}^\infty \frac{(1/2)_n}{n!}\frac{(-1)^n}{(4n+1)^2}}
{\sum_{n=0}^\infty \frac{(1/2)_n (1/4)_n}{(3/4)_n n!} \frac{1}{4n+1}}.$$

\end{document}